\numberwithin{equation}{section}
\newtheorem{lem}{Lemma}[section]
\newtheorem{thm}{Theorem}[section]
\newtheorem{cor}[thm]{Corollary}
\theoremstyle{remark}
\newtheorem{rmk}{Remark}[section]
\newtheorem{assum}{Assumption}
\newcommand{\nn}{\nonumber}
\renewcommand{\tilde}{\widetilde}
\renewcommand{\hat}{\widehat}
\renewcommand{\bar}{\overline}
\newcommand{\mc}[1]{\mathcal{#1}}
\newcommand{\EE}{\mathbb{E}}
\newcommand{\RR}{\mathbb{R}}
\newcommand{\NN}{\mathbb{N}}
\newcommand{\PP}{\mathbb{P}}
\newcommand{\OV}{\overline{V}}
\newcommand{\OX}{\overline{X}}
\newcommand{\OY}{\overline{Y}}
\newcommand{\fm}{f^m}
\newcommand{\rom}{\rho^m}
\newcommand{\TE}{\mathcal{E}}
\newcommand{\abs}[1]{\left\lvert#1\right\rvert}
\author{Cristina Cipriani}
\address{Department of Mathematics, Technical University Munich, Munich, Germany}
\email{cristina.cipriani@ma.tum.de}
\author{Hui Huang}
\address{Department of Mathematics and Statistics, University of Calgary, Calgary, Canada}
\email{hui.huang1@ucalgary.ca}
\author{Jinniao Qiu}
\address{Department of Mathematics and Statistics, University of Calgary, Calgary,  Canada}
\email{jinniao.qiu@ucalgary.ca}
\date{\today}
\thanks{The authors would like to thank Prof. Lorenzo Pareschi for suggesting this problem to them. C. C.  acknowledges  the partial support of the project "Online Firestorms And Resentment Propagation On Social Media: Dynamics, Predictability and Mitigation" of the TUM Institute for Ethics in Artificial Intelligence. H. H. is partially supported by the Pacific Institute for the Mathematical Sciences (PIMS) postdoctoral fellowship. J. Q. is partially supported  by the National Science and Engineering Research Council of Canada (NSERC) and by the start-up funds from the University of Calgary. }
\begin{document}
\title[Zero-inertia limit for PSO]{Zero-inertia limit: From Particle Swarm Optimization to Consensus-Based Optimization}
\maketitle
\begin{abstract}
Recently a continuous description of the particle swarm optimization (PSO) based on a system of stochastic differential equations was proposed by Grassi and Pareschi in \cite{grassi2020particle} where the authors formally showed the link between PSO and the consensus-based optimization (CBO) through zero-inertia limit.  This paper  is devoted to solving this theoretical open problem proposed in  \cite{grassi2020particle} by providing a rigorous derivation of CBO from PSO through the limit of zero inertia, and a quantified convergence rate is obtained as well. The proofs are based on a probabilistic approach by investigating both the weak and strong convergence of the corresponding stochastic differential equations (SDEs) of Mckean type in the continuous path space and the results are illustrated with some numerical examples.
\end{abstract}
{\small {\bf Keywords:} Swarm optimization, consensus-based optimization, Laplace's principle, tightness.}

\section{Introduction}
Over the last decades, large systems of interacting particles are widely used in the investigation of complex systems that
model collective behavior (or swarming), an area that has attracted a great deal of attention; see for instance \cite{carrillo2010asymptotic,bellomo2013microscale,ha2008particle,motsch2014heterophilious} and references therein.
Such complex systems frequently appear in modeling phenomena such as biological swarms \cite{cucker2007emergent}, crowd  dynamics \cite{bellomo2011modeling}, self-assembly of nanoparticles \cite{holm2006formation}, and opinion formation \cite{motsch2014heterophilious}. In the field of global optimization, similar particle models are also used in \textit{metaheuristics} 
\cite{Aarts:1989:SAB:61990,Back:1997:HEC:548530,Blum:2003:MCO:937503.937505,Gendreau:2010:HM:1941310}, which provide empirically robust solutions to tackle hard optimization problems with fast algorithms. Metaheuristics are  methods that orchestrate an interaction between local improvement procedures and global/high level strategies, and combine random and deterministic decisions,
to create a process capable of escaping from local optima and performing a robust search of a solution space. In the sequel, we consider the following optimization problem
\begin{equation}\label{optimization}
x^*\in\mbox{argmin}_{x\in \RR^d}\TE(x)\,,
\end{equation}
where $\TE(x):~\RR^d\to \RR$ is a given continuous cost function.

One noble example of metaheuristics is the so-called Particle Swarm Optimization (PSO), which was initially introduced to model the intelligent collective behavior of complex biological systems such as flocks of birds or schools of fish \cite{kennedy1995particle,kennedy1997particle,shi1998modified}, and it is now widely recognized as an efficient method for tackling complex optimization problems \cite{poli2007particle,lin2008particle}. Certain convergence and stability analysis of PSO may be found, for instance, in \cite{bruned2018weak,van2010convergence,schmitt2015particle,poli2009mean} and readers would be referred to \cite{zhang2015comprehensive} for a review on the PSO method and its variants and applications.
The PSO method solves optimization problem \eqref{optimization} by considering a group of candidate solutions, which are represented by particles. Then the algorithm moves those particles in the search space according to certain mathematical relationships on the particle position and velocity. Each particle is driven to its best known local location, which is updated once the particles find better positions.
However,  the mathematical understanding of PSO is still in its infancy. Recently Grassi and Pareschi \cite{grassi2020particle} took a significant first step towards a mathematical theory for PSO based on a continuous description in the form of a system of stochastic differential equations:
\begin{align}\label{PSO}
\begin{cases}
dX_t^{i,m}= V_t^{i,m}d t, \\
%\mu^\sigma_t|_{t=0}=\mu_0\,, \\[2pt]
dV_t^{i,m}=-\frac{\gamma}{m}V_t^{i,m}dt+\frac{\lambda}{m}(X_t^\alpha(\rho^{N,m})-X_t^{i,m})dt+\frac{\sigma}{m}D(X_t^\alpha(\rho^{N,m})-X_t^{i,m})dB_t^i,\quad i=1,\cdots,N\,,
\end{cases}
\end{align}
where the $\RR^d$-valued functions $X_t^{i,m} $ and $V_t^{i,m}$ denote the position and velocity of the $i$-th particle at time $t$, $m>0$ is the inertia weight, $\gamma=1-m\geq0$ is the friction coefficient,  $\lambda>0$ is the acceleration coefficient, $\sigma>0$ is the diffusion coefficient,  and $\{(B_t^i)_{t\geq0}\}_{i=1}^N$ are $N$ independent $d$-dimensional Brownian motions. 
We also use the notations for the diagonal matrix
$$D(X_t):=\mbox{diag}\{(X_t)_1,\dots,(X_t)_d\}\in\RR^{d\times d}\,,$$ 
where $(X_t)_k$ is the $k$-th component of $X_t$, and  the weighted average is given by
\begin{equation}\label{regularizer}
{X}_{t}^{\alpha}(\rho^{N,m}):=\frac{\int_{\RR^d}x\omega_{\alpha}^{\mc{E}}(x)\rho^{N,m}(t,dx)}{\int_{\RR^d}\omega_{\alpha}^{\mc{E}}(x)\rho^{N,m}(t,dx)}
\end{equation}
with  the empirical measure $ \rho^{N,m}(t,dx):=\frac{1}{N}\sum_{i=1}^{N}\delta_{X_t^{i,m}}(dx)$. So we can rewrite
\begin{equation}\label{noise}
D(X_t^\alpha(\rho^{N,m})-X_t^{i,m})dB_t^i=\sum_{k=1}^d(X_t^\alpha(\rho^{N,m})-X_t^{i,m})_kd(B_t^i)^ke_k\,,
\end{equation}
where $e_k$ is the unit vector in the $k$-th dimension for $k=1,\dots,d$. Noise of the form \eqref{noise} is called the anisotropic component-wise noise, which was used in \cite{CJ,fornasier2021anisotropic} to remove the dimensionality dependence for the CBO method.
Furthermore, the initial data $(X_0^i,V_0^i)_{i=1}^N$ are independent and identically distributed (i.i.d.) with  the common distribution $f_0\in \mc{P}_4(\RR^{2d})$, where $\mc{P}_4(\RR^{2d})$ denotes the space of probability measures with finite fourth moment, endowed with the Wasserstein distance \cite{ambrosio2008gradient}. The choice of the weight function $$\omega_\alpha^\TE(x):=\exp(-\alpha\TE(x))$$ comes from  the  well-known Laplace's principle \cite{miller2006applied,Dembo2010}, a classical asymptotic method for integrals, which states that for any probability measure $\rho\in\mc{P}( \RR^d )$, there holds
\begin{equation}\label{lap_princ}
\lim\limits_{\alpha\to\infty}\left(-\frac{1}{\alpha}\log\left(\int_{ \RR^d }\omega_\alpha^\TE(x)\rho(dx)\right)\right)=\inf\limits_{x \in \rm{supp }(\rho)} \TE(x)\,.
\end{equation}
Thus for $\alpha$ large enough, one expects that $${X}_{t}^{\alpha}(\rho^{N,m})\approx\mbox{argmin }\{\TE(X_t^{1,m}),\dots,\TE(X_t^{N,m})\}\,,$$
which means that ${X}_{t}^{\alpha}(\rho^{N,m})$ is a global best location at time $t$.

\begin{figure}[tb]
	\includegraphics[scale=0.49]{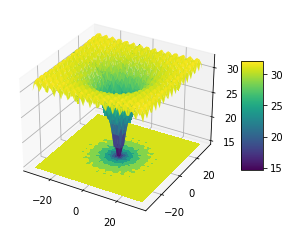}\,
	\includegraphics[scale=0.4]{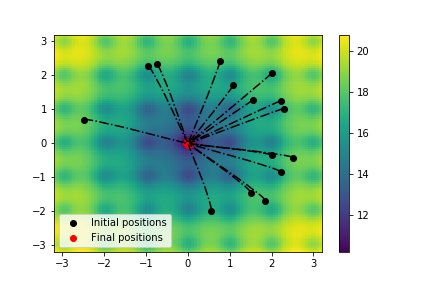}\,
	\caption{Left: the Ackley function for $d=2$ with the unique  global minimum at the point $x^*=(0,0)^T$. Right: Particles trajectories of  the PSO model \eqref{PSO} during the simulation for the 2-$d$ Ackley function with the global minimizer $x^*$. The simulation parameters are: time discretization $0.01$, number of particles $10^{3}$, $\lambda = 1$, $\sigma = \frac{1}{\sqrt{3}}$, $\alpha= 30$, $m= 0.1$. The initial data are sampled from a normal bi-dimensional distribution.}
	\label{fg:ackley}
\end{figure}

\begin{figure}[tb]
\includegraphics[scale=0.32]{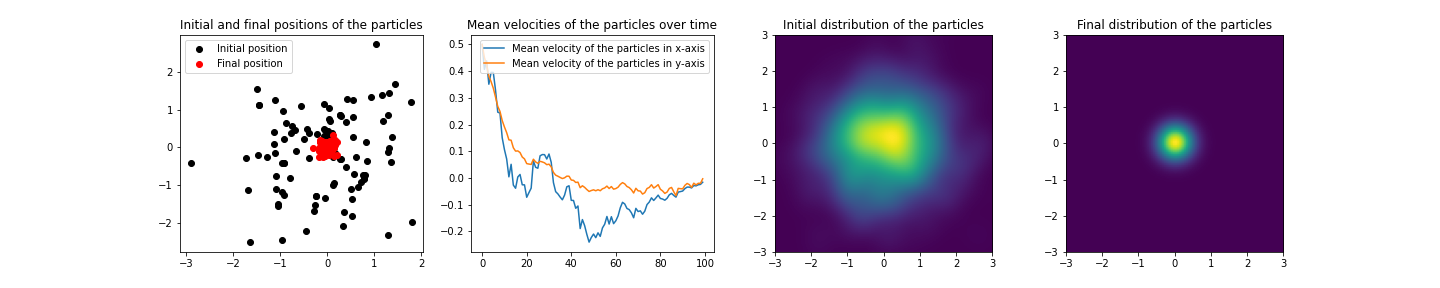}\,
	\caption{Application of the PSO  dynamics \eqref{PSO} to the 2-$d$ Ackley function $\TE(x)$ with the global minimizer $x^*=(0,0)^T$.
		Particles initially have a normal distribution around $x^*$. Then all particles converge to one point,  the global minimizer $x^*$,  and they stop moving eventually, i.e. velocity converges to zero. The simulation parameters are the ones described below Figure \ref{fg:ackley}.}
	\label{fg:particle}
\end{figure}

Before starting our analysis of the PSO dynamics \eqref{PSO}, let us illustrate
numerically the behavior of the dynamics for the benchmark Ackley function
$$\mathcal{E}(x)=-20 \exp \left(-\frac{0.2}{\sqrt{d}}\left|x-x^{*}\right|\right)-\exp \left(\frac{1}{d} \sum_{k=1}^{d} \cos \left(2 \pi \left(x_k-x_{k}^{*}\right)\right)\right)+e+20$$
in the case of $d=2$, and with the global minimizer $x^*=(0,0)^T$. 
In Figures \ref{fg:ackley} and \ref{fg:particle}, we initialize the particles with a normal distribution around $x^*$ and then apply a discretization scheme (which will be explained in Section \ref{numerics}) to the system \eqref{PSO}. We can see that all the particles successfully find the global minimizer $x^*$, and particles' velocity converges to zero.

As it has been shown in \cite{grassi2020particle}, in the zero-inertia limit ($m\to 0$), one may expect to obtain  the recent developed Consensus-Based Optimization (CBO) dynamics \cite{carrillo2018analytical,ha2019convergence,PTTM,fornasier2021consensus} satisfying
\begin{equation}\label{CBO}
dX_t^i= \lambda(X_t^{\alpha}(\rho^N)-X_t^i)dt +\sigma D(X_t^{\alpha}(\rho^N)-X_t^i)dB_t^i,\quad i=1,\cdots,N\,,
\end{equation}
where $$
{X}_{t}^{\alpha}(\rho^{N}):=\frac{\int_{\RR^d}x\omega_{\alpha}^{\mc{E}}(x)\rho^{N}(t,dx)}{\int_{\RR^d}\omega_{\alpha}^{\mc{E}}(x)\rho^{N}(t,dx)}
\mbox{ with } \rho^{N}(t,dx):=\frac{1}{N}\sum_{i=1}^{N}\delta_{X_t^{i}}(dx)\,.$$
It has been proved that CBO is a powerful and robust method to solve many interesting non-convex high-dimensional optimization problems in machine learning \cite{CJ}. By now, CBO methods have also been generalized  to optimization over manifolds \cite{FHPS1,FHPS2,kim2020stochastic,fornasier2021anisotropic}. The objective of the present paper is to complete a theory gap suggested in \cite{grassi2020particle} by providing a rigorous proof of the zero-inertia limit. 

On the one hand, as $N\to\infty$, the mean-field limit results (see \cite{bolley2011stochastic,huang2020mean,carrillo2019propagation,sznitman1991topics,jabin2017mean,huang2021note,huang2021m} for instance) indicate that our PSO dynamics \eqref{PSO} converges to the solutions of the following  mean-field  nonlinear Mckean systems:
\begin{subequations}\label{MVeq}
	\begin{numcases}{}
	d\OX_t^m= \OV_t^mdt, \label{eqX}\\
	d\OV_t^m=-\frac{\gamma}{m}\OV_t^mdt+\frac{\lambda}{m}(X_t^{\alpha}(\rho^m)-\OX_t^m)dt+\frac{\sigma}{m}D(X_t^{\alpha}(\rho^m)-\OX_t^m)dB_t\,, \label{eqV}
	\end{numcases}
\end{subequations}
where 
\begin{equation}\label{14}
X_t^{\alpha}(\rho^m)=\frac{\int_{\RR^d}x\omega_{\alpha}^{\mc{E}}(x)\rho^m(t,dx)}{\int_{\RR^d}\omega_{\alpha}^{\mc{E}}(x)\rho^m(t,dx)}, \quad \rho^m(t,x)=\int_{\RR^d}f^m(t,x,dv)\,,
\end{equation}
and the initial data $(\OX_0,\OV_0)$ is the same as in \eqref{PSO}. Here  $f^m(t,x,v)$ is  the distribution of $(\OX_t^m,\OV_t^m)$ at time $t$ , which makes the set of equations \eqref{MVeq} nonlinear.  We refer to \cite{huang2021note} for a proof the well-posedness of the PSO particle system \eqref{PSO} and its mean-field dynamic \eqref{MVeq}. A direct application of the It\^{o}-Doeblin formula yields that the law $f_t^m:=f^m(t,\cdot,\cdot)$ at time $t$ is a weak solution to the following  nonlinear Vlasov-Fokker-Plank equation
\begin{equation}\label{PSO eq}
\partial_{t} \fm_t+v \cdot \nabla_{x} \fm_t=\nabla_{v} \cdot\left(\frac{\gamma}{m} v \fm_t+\frac{\lambda}{m}\left(x-{X}_t^{\alpha}(\rom)\right) f_t+\frac{\sigma^{2}}{2 m^{2}} \left[D\left(x-{X}_t^{\alpha}(\rom)\right)\right]^{2} \nabla_{v} \fm_t\right),
\end{equation}
with the initial data $\fm_0(x,v)=\mbox{Law}(\OX_0,\OV_0)$.  On the other hand, taking $N\to \infty$ in \eqref{CBO} leads to the mean-field CBO dynamic of the form
\begin{align}\label{MVCBO}
d\OX_t=\lambda(X_t^{\alpha}(\rho)-\OX_t)dt +\sigma D(X_t^{\alpha}(\rho)-\OX_t)dB_t
\end{align}
with $\rho_t=\mbox{Law}(\OX_t)$ satisfying the corresponding CBO equation
 \begin{equation}\label{CBOeq}
\partial_{t} \rho_t+\lambda\nabla_{x} \cdot (\rho_t({X}_t^{\alpha}(\rho)-x) )=\frac{\sigma^{2}}{2} \sum_{j=1}^{d} \frac{\partial^{2}}{\partial x_j^{2}}\left(\rho_t\left(x_{j}-({X}_{t}^{\alpha}(\rho))_j\right)^{2}\right)\,.
\end{equation}

In this paper, we prove that  in the zero-inertia limit, as $m\to 0^+$, the processes $\{\OX^m\}$ satisfying SDEs \eqref{MVeq}  converge to the solution  $\OX$ to the SDE \eqref{MVCBO} in the continuous path space $\mc{C}([0,T];\RR^d)$. A convergence rate is obtained and the generalization to the case with memory effects is also addressed. This is related to the study of the overdamped limit \cite{kramers1940brownian,choi2020quantified,duong2017variational}, or large friction limit \cite{carrillo2020quantitative,jabin2000macroscopic,fetecau2015first} for Vlasov type equations.  However, the nonlinear term $X_t^\alpha(\rho^m)$ here makes our model very different from theirs, which is nonstandard in the literature. Moreover, all of those results mentioned earlier are obtained through the investigation of PDEs like \eqref{PSO eq} and \eqref{CBOeq}, while in the present paper we adopt a probabilistic approach  by investigating the convergence of the non-Markovian stochastic processes $\{\OX^m\}$ satisfying the SDE \eqref{MVeq} to the solution $\{\OX\}$ to SDE \eqref{MVCBO})  in the continuous path space.
%This is less technical than the methods from PDE analysis but more accessible to non-specialists.

The rest of the paper is organized as follows: In Section 2 we verify the tightness of the PSO model \eqref{MVeq} through Aldous criteria, which allows us to obtain the zero-inertia limit from the PSO model \eqref{MVeq} towards the CBO model \eqref{MVCBO} as $m\to 0^+$; see Theorem \ref{thmzero-limit}. Then in Section 3 we generalize the result to the PSO model with memory effects of the local best positions. Lastly we conclude this paper in Section 4 by reporting a few instructive numerical experiments that aim to validating the zero-inertia limit.

\section{Zero-inertia limit}
Throughout this work, the letter $C$ denotes a generic constant whose value may vary from line to line and its dependence on certain model parameters will be specified whenever needed. We start this section with the standing assumption on the cost function $\TE$.
\begin{assum}\label{asum}
	The given cost function $\TE:\RR^d\rightarrow \RR$ is locally Lipschitz continuous and satisfies the properties:
	\begin{itemize}
		\item[(1)]  There exists some constant $L>0$ such $|\TE(x)-\TE(y)|\leq L(|x|+|y|)|x-y|$ for all $x,y\in\RR^d$;
		\item[(2)]	$\TE$ is uniformly bounded, i.e. $-\infty < \underline{\TE}:=\inf \TE \leq \TE\leq \sup \TE=:\overline \TE < + \infty$,  and define $C_{\alpha,\TE}:=e^{\alpha(\overline \TE-\underline{\TE})}$\,.
	\end{itemize}
\end{assum}

The following theorem gives the well-posedness of the mean-field PSO dynamics  \eqref{MVeq} whose proof is  analogous to \cite[Theorem 2.3]{huang2021note} and \cite[Theorem 3.1]{carrillo2018analytical}, and thus omitted.
\begin{thm}\label{thm-huang2021note}
Let Assumption \ref{asum} hold. If $(\OX^m_0,\OV^m_0)=(\OX_0,\OV_0)$ is distributed according to $f_0$ with $f_0\in\mc{P}_4(\RR^{2d})$, then for each $m\in(0,1]$ and $T>0$, the nonlinear SDE \eqref{MVeq} admits a unique strong solution up to time $T$ with the initial data $(\OX^m_0,\OV^m_0)$, and it holds further that
\begin{equation}\label{secmen}
\sup\limits_{t\in[0,T]}\EE\left[|\OX_t^m|^4+|\OV_t^m|^4\right]\leq e^{CT} \cdot \EE\left[ |\OX_0|^4+|\OV_0|^4\right]\,,
\end{equation}
where $C$ depends only on $\lambda,m,\sigma$, and $C_{\alpha,\mc{E}}$.
\end{thm}

 Solving \eqref{eqV} for $\OV_t^m$ gives  
 \begin{equation*}
\OV_t^m=e^{-\frac{\gamma}{m}t}\left(\OV_0+\frac{\lambda}{m}\int_0^te^{\frac{\gamma s}{m}}(X_s^{\alpha}(\rho^m)-\OX_s^m)ds+\frac{\sigma}{m}\int_0^te^{\frac{\gamma s}{m}}D(X_s^{\alpha}(\rho^m)-\OX_s^m)dB_s\right)\,,
\end{equation*}
which implies that
\begin{align}\label{onlyX}
\OX_t^m&=\OX_0+\int_0^t\OV_\tau d\tau
=\OX_0+\int_0^t  e^{-\frac{\gamma}{m}\tau}\OV_0d\tau
+\frac{\lambda}{m}\int_0^te^{-\frac{\gamma}{m}\tau} \int_0^\tau e^{\frac{\gamma}{m}s}(X_s^{\alpha}(\rho^m)-\OX_s^m)dsd\tau\notag\\
& \quad +\frac{\sigma}{m}\int_0^te^{-\frac{\gamma}{m}\tau} \int_0^\tau e^{\frac{\gamma}{m}s}D(X_s^{\alpha}(\rho^m)-\OX_s^m)dB_s d\tau\,.
\end{align}
Then $\OX_t^m$ has the law $\rho^m_t$ for each $t\geq 0$. 

For each $k\in\mathbb N^+$, denote by $\mc{C}([0,T];\RR^k)$ the space of all $\RR^k$-valued continuous functions on $[0,T]$ equipped with the usual uniform norm $\|\cdot\|_0$. 
%Although the concerned processes including $\OX^m,\OV^m$, and $\OX$ are having continuous paths, we shall use the weak convergence in the Skorokhod space $ \mathcal{D}([0,T];\R^k)$ 
%of all c\`{a}dl\`{a}g functions on $[0,T]$ taking values in the space
%$\R^k$. The space $\mathcal{D}([0,T];\R^k)$ is equipped with the usual Skorokhod metric (see Jacod and Shiryaev \cite{jacod2002limit}).
%Obviously, we have $\mathcal{C}([0,T];\R^k)\subset \mathcal{D}([0,T];\R^k)$ for each $k\in\mathbb N^+$.
Each continuous stochastic process $\OX^m$ may be seen as a $\mc{C}([0,T];\RR^d)$-valued random function and it induces a probability measure (or law, denoted by $\rho^m$) on $\mc{C}([0,T];\RR^d)$. We shall use the convergence in the space of probability measures on $\mc{C}([0,T];\RR^d)$. In what follows,  we write $\OX^m \rightharpoonup \OX$ or $\rho^m \rightharpoonup \rho$ with $\rho$ being the law of $\OX$,  if $\left\{\rho^m\right\}_{m>0}$, as a sequence of probability measures,  converges weakly to $ \rho$, i.e.,
for each bounded continuous functional $\Phi$ on $\mc{C}([0,T];\RR^d)$ , there holds $\lim_{m\rightarrow 0^+}\EE\left[\Phi(\OX^m)\right]= \EE\left[\Phi(\OX)\right]$. The weak convergence $\OX^m \rightharpoonup \OX$ is stronger than,  and actually implies the convergence of $\{\rho^m_t \}_{m>0}$ to $ \rho_t$ with $\rho_t$ being the law of $\OX_t$ for each $t\geq 0$, while the converse need not hold. Moreover, due to the separability and completeness of the space $\mc{C}([0,T];\RR^d)$,  Prohorov's theorem implies that the relative compactness is equivalent to the tightness; see \cite{billingsley1999convergence} for more details.

The proof of zero-inertia limit will proceed in two steps:
\begin{itemize}
	\item The tightness of the sequence of probability distributions $\{\rho^m\}_{0< m\leq 1}$ of $\{\OX^m\}_{0< m\leq 1}$ is justified by using Aldous tightness criteria.
	\item We will check that all the limit points of $\{\OX^m\}_{0< m\leq 1}$ as $m\to 0$ satisfy mean-field CBO dynamic \eqref{MVCBO} which in fact admits a unique strong solution.
\end{itemize}
For the sake of completeness, we recall a result concluded directly from the Aldous tightness criteria \cite[Theorem 4.5]{jacod2002limit}; it might have been existing somewhere which, however, we did not find, so its straightforward proof is sketched below.
\begin{lem}\label{lemAldous}
Let $\{X^n\}_{n\in \NN}$ be a sequence of random variables defined on a probability space $(\Omega,\mc{F},\PP)$ and valued in $\mc{C}([0,T];\RR^d)$. The sequence of probability distributions $\{\mu_{X^n}\}_{n\in \NN}$  of $\{X^n\}_{n\in \NN}$ is tight if the following two conditions hold.

$(Con 1)$ For all $t\in [0,T]$, the set of distributions of $X_t^n$, denoted by $\{\mu_{X_t^n}\}_{n\in\NN}$, is tight in $\RR^d$.

$(Con 2)$ For all $\varepsilon>0$, $\eta>0$, there exists $\delta_0>0$ and $n_0\in\NN$ such that for all $n\geq n_0$ and for all $(\sigma(X^n_s;s\in[0,t]))_{t\geq 0}$-stopping times $\beta$ satisfying $0\leq \beta+\delta_0\leq T$, it holds that
\begin{equation}
\sup_{\delta\in[0,\delta_0]}\PP\left(|X^n_{\beta+\delta}-X^n_{\beta}|\geq \eta\right)\leq \varepsilon\,.
\end{equation}
 \end{lem}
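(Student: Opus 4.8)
The plan is to combine Prohorov's theorem with the Arzel\`a--Ascoli characterization of precompact subsets of the Polish space $\mc{C}([0,T];\RR^d)$: a set $A$ has compact closure if and only if $\sup_{x\in A}|x(0)|<\infty$ and the modulus of continuity $w_x(\delta):=\sup_{s,t\in[0,T],\,|s-t|\le\delta}|x(s)-x(t)|$ satisfies $\sup_{x\in A}w_x(\delta)\to 0$ as $\delta\to 0$. Accordingly, to prove tightness of $\{\mu_{X^n}\}$ it suffices to establish (a) $\lim_{R\to\infty}\sup_n\PP(|X^n_0|\ge R)=0$ and (b) for each $\eta>0$, $\lim_{\delta\to 0^+}\limsup_{n}\PP\big(w_{X^n}(\delta)\ge\eta\big)=0$: given these, one picks $R$ to absorb the initial positions and a sequence $\delta_k\downarrow 0$ so that $\sup_n\PP\big(w_{X^n}(\delta_k)\ge 1/k\big)$ is summably small (using the a.s.\ continuity of the sample paths for the finitely many small $n$), and then $K:=\{x:\ |x(0)|\le R,\ w_x(\delta_k)\le 1/k\ \forall k\}$ is compact by Arzel\`a--Ascoli with $\inf_n\PP(X^n\in K)\ge 1-\varepsilon$. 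Now (a) is exactly $(Con\,1)$ at $t=0$, so the whole matter reduces to deducing (b) from $(Con\,2)$.

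For (b), I would fix $\eta,\varepsilon>0$ and introduce the first-passage stopping times $\tau_0:=0$, $\tau_{k+1}:=\inf\{t>\tau_k:\ |X^n_t-X^n_{\tau_k}|\ge\eta/3\}$ (with the convention $\inf\emptyset:=T+1$); these are stopping times of the natural filtration of $X^n$, and since the paths are uniformly continuous on $[0,T]$ only finitely many of them fall in $[0,T]$, say $K_n:=\#\{k\ge 1:\ \tau_k\le T\}<\infty$ almost surely. If every two consecutive passage times were at distance $\ge\delta$, any $s,t$ with $|s-t|\le\delta$ would lie in at most two adjacent intervals $[\tau_{j-1},\tau_j)$, on each of which the oscillation of $X^n$ is $<2\eta/3$, forcing $|X^n_s-X^n_t|<\eta$; hence
\begin{equation*}
\big\{w_{X^n}(\delta)\ge\eta\big\}\ \subseteq\ \bigcup_{k\ge 1}\big\{\tau_k\le T,\ \tau_{k+1}-\tau_k<\delta\big\}.
\end{equation*}
Since $\{\tau_{k+1}-\tau_k<\delta\}\subseteq\{\sup_{0\le u\le\delta}|X^n_{\tau_k+u}-X^n_{\tau_k}|\ge\eta/3\}$ by continuity, this gives, for every integer $M$,
\begin{equation*}
\PP\big(w_{X^n}(\delta)\ge\eta\big)\ \le\ \PP(K_n>M)\ +\ \sum_{k=1}^{M}\PP\Big(\sup_{0\le u\le\delta}\big|X^n_{\tau_k+u}-X^n_{\tau_k}\big|\ge\eta/3\Big),
\end{equation*}
so (b) follows once one shows (i) that for every stopping time $\beta$ the running-maximum probability $\PP\big(\sup_{0\le u\le\delta}|X^n_{\beta+u}-X^n_\beta|\ge\eta/3\big)$ is small uniformly in $n$ for $\delta$ small, and (ii) that $\PP(K_n>M)$ is small uniformly in $n$ for $M=M(\varepsilon)$ large. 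Both are morally consequences of $(Con\,2)$ — (i) because it controls the displacement away from a stopping time, and (ii) because it prevents successive $\eta/3$-oscillations from piling up in a short time.

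The step I expect to be the genuine obstacle is exactly this passage from $(Con\,2)$ to (i) and (ii). Condition $(Con\,2)$ bounds the displacement between a stopping time $\beta$ and the \emph{fixed} later instant $\beta+\delta$, whereas (i) demands control of the maximal displacement over the \emph{whole random window} $[\beta,\beta+\delta]$ — equivalently of $\PP\big(|X^n_{\beta+V}-X^n_\beta|\ge\eta/3\big)$ for the random hitting time $V\in(0,\delta]$ — and (ii) calls for a quantitative bound on how fast consecutive oscillations can occur (for example through an exponential-moment estimate for $\tau_{M+1}$). Bridging this fixed-versus-random-time gap is the technical heart of Aldous's theorem: it is effected by an iterative refinement that exploits the fact that $(Con\,2)$ controls the increment \emph{uniformly over all} offsets in $[0,\delta_0]$, so that the random time $V$ may be successively approximated by fixed dyadic offsets and $(Con\,2)$ applied together with the triangle inequality at each scale, ensuring that only increments over fixed spans ever appear. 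Since this is a classical result, for the purposes of this paper it suffices to invoke it; the compactness and tightness machinery underlying the argument — and Aldous's criterion in particular — is treated in \cite{billingsley1999convergence} and the references therein.
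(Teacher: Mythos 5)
The paper does not actually prove this lemma: it is recalled verbatim as Aldous's classical tightness criterion and its proof is deferred to the standard references, notably \cite{billingsley1999convergence}. Your reduction to Arzel\`a--Ascoli precompactness via Prohorov's theorem, the first-passage-time decomposition with the $\tau_k$, and the inclusion $\{w_{X^n}(\delta)\ge\eta\}\subseteq\bigcup_k\{\tau_k\le T,\ \tau_{k+1}-\tau_k<\delta\}$ are all faithful to the standard skeleton, and you correctly isolate the genuine difficulty: $(Con\,2)$ controls the increment $X^n_{\beta+\delta}-X^n_\beta$ only for a \emph{fixed} offset $\delta$, while what one must bound is the displacement over the random span $[\tau_k,\tau_{k+1}]$.

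Where your sketch misleads is in the claim that this gap is bridged by an iterative dyadic refinement with the triangle inequality at each scale. The classical mechanism is a single averaging step: from $(Con\,2)$ one extracts $\frac{1}{\delta_0}\int_0^{\delta_0}\PP\bigl(|X^n_{\beta+u}-X^n_\beta|\ge\eta\bigr)\,du\le\varepsilon$ uniformly over stopping times $\beta$; applying this at both $\beta=\sigma$ and $\beta=\tau$ for a nested pair of stopping times $\sigma\le\tau\le\sigma+\delta_0/2$ and performing a Fubini/change-of-variables on the random shift $v=u-(\tau-\sigma)$ (possible precisely because the offset range $[0,\delta_0]$ absorbs translations by $\tau-\sigma\le\delta_0/2$) yields $\PP\bigl(|X^n_\tau-X^n_\sigma|\ge 2\eta\bigr)\le 3\varepsilon$. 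This one inequality simultaneously controls $\PP(\tau_{k+1}-\tau_k<\delta)$ and, upon iteration of the passage-time construction, the number of oscillation epochs $K_n$. Since the paper invokes the lemma as a black box, your decision to defer to the literature for the full details is perfectly reasonable; just be aware that the bridging technique is an averaging/Fubini argument, not a dyadic iteration.
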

 
 \begin{proof}[Sketched proof]\label{rmk-aldous}
 First, we note that the Aldous tightness criteria is normally stated for the tightness of stochastic processes valued in the space of all c\`{a}dl\`{a}g (right continuous with left limits) functions under the so-called Skorokhod topology (see \cite[Theorem 1]{aldous1978stopping}, \cite[Theorem 16.10, Page 178]{billingsley1999convergence} or \cite[Theorem 4.5, Page 356]{jacod2002limit}).  In fact, by the assertion at the beginning of the proof of \cite[Theorem 1]{aldous1978stopping}, the condition $(Con 2)$ implies Hypothesis (A) therein (see \cite[Pages 335 \& 338]{aldous1978stopping}). Therefore, applying \cite[Theorem 1]{aldous1978stopping} directly gives the tightness of $\{X^n\}_{n\in \NN}$ in the space of all c\`{a}dl\`{a}g functions under the Skorokhod topology.

Noteworthily, the condition $(Con 1)$ implies that
 \begin{align}
 \lim_{\eta\rightarrow \infty}\limsup_{n\rightarrow \infty}\PP(|X_t^n|> \eta)=0, \quad \forall\, t\in[0,T],
 \label{B-condition1}
 \end{align}
 Indeed, for each $t\in [0,T]$ and $\epsilon>0$, the tightness in condition $(Con 1)$ indicates that there exists a compact set $\mathcal K_{\epsilon}\subset \RR^d$ such that 
$ \sup_{n\in\mathbb N}\PP (X_t^n\notin\mathcal K_{\epsilon})< \epsilon$. Choosing $\eta_{\epsilon}>0$ to be so big that  $\mathcal K_{\epsilon}\subset \{x\in\mathbb R^d:\, |x|\leq \eta_{\epsilon}\}$, we have $\sup_{n\in\mathbb N}\PP (|X_t^n|>\eta_{\epsilon})< \epsilon$, which by the arbitrariness of $t$ and $\epsilon$ implies 
 \eqref{B-condition1}.
 
 Further, by \cite[Proposition 3.26 (i) \& (iii), Page 351]{jacod2002limit}, because each $X^n$ herein is valued in $\mc{C}([0,T];\RR^d)$ having continuous trajectories, we have the $\mc C$-tightness of $\{X^n\}_{n\in\NN}$ which means the time-continuity of the paths of the limit(s), whereas this is still under the Skorokhod topology. However, in view of the equivalence of $\mc C$-tightness in \cite[Proposition 3.26 (i) \& (ii), Page 351]{jacod2002limit}, the $\mc C$-tightness implies further that for all $\epsilon>0$ and $\eta>0$, there are $N_0\in\NN$ and $\theta\in(0,T]$ such that 
\begin{align}
\sup_{n\geq N_0} \PP\left( \sup_{r,s\in[0,T],\,|r-s|\leq \theta} |X^n_s-X^n_r|  >\eta \right)\leq \epsilon. \label{B-condition2}
\end{align}
Combining \eqref{B-condition1} and \eqref{B-condition2}, we may use the tightness criteria for probability measures on $\left(\mc{C}([0,T];\RR^d),\,\|\cdot\|_0\right)$ in \cite[Theorem 7.3, Page 82]{billingsley1999convergence} to obtain the tightness of $\{\mu_{X^n}\}_{n\in \NN}$ 
 on $\left(\mc{C}([0,T];\RR^d),\,\|\cdot\|_0\right)$ that is endowed with the uniform norm.  
 \end{proof}

\begin{thm}[Tightness]\label{thmtight}
Let Assumption \ref{asum} hold and $(X_t^m,V_t^m)_{t\in[0,T]}$ satisfy the system \eqref{MVeq}. For each countable subsequence $\{m_k\}_{k\in\mathbb N} \subset (0,1]$ with $\lim_{k\rightarrow \infty}m_k=0$, the sequence of probability distributions $\{\rho^{m_k}\}_{k\in\mathbb N}$ of $\{\OX^{m_k}\}_{k\in\mathbb N}$  is tight on $\left(\mc{C}([0,T];\RR^d),\,\|\cdot\|_0\right)$.
\end{thm}
\begin{proof}
It is sufficient to justify conditions $(Con 1)$ and $(Con 2)$ in Lemma \ref{lemAldous}.
	
	$\bullet$ \textit{Step 1: Checking $(Con 1)$. }  
	First, for $0< m\leq \frac{1}{2}$, recalling \eqref{onlyX}, we have by Fubini's theorem (see \cite[Theorem 4.33]{da2014stochastic} for the stochastic version)
		\begin{align}\label{Fubini}
	\OX_t^m&=\OX_0+\int_0^t  e^{-\frac{\gamma}{m}\tau}\OV_0d\tau
	+\frac{\lambda}{m}\int_0^t\int_0^\tau e^{-\frac{\gamma}{m}(\tau-s)}(X_s^{\alpha}(\rho^m)-\OX_s^m)dsd\tau\notag\\
	& \quad +\frac{\sigma}{m}\int_0^t\int_0^\tau e^{-\frac{\gamma}{m}(\tau-s)}D(X_s^{\alpha}(\rho^m)-\OX_s^m)dB_s d\tau \notag\\
	&=\OX_0+\int_0^t  e^{-\frac{\gamma}{m}\tau}\OV_0d\tau
	+\frac{\lambda}{m}\int_0^t\int_s^t e^{-\frac{\gamma}{m}(\tau-s)}d\tau (X_s^{\alpha}(\rho^m)-\OX_s^m)ds\notag\\
	& \quad +\frac{\sigma}{m}\int_0^t\int_s^t e^{-\frac{\gamma}{m}(\tau-s)}d\tau D(X_s^{\alpha}(\rho^m)-\OX_s^m)dB_s \notag\\
	&=\OX_0+\frac{m}{\gamma}(1-e^{-\frac{\gamma}{m}t})\OV_0
	+\frac{\lambda}{\gamma}\int_0^t(1-e^{-\frac{\gamma}{m}(t-s)}) (X_s^{\alpha}(\rho^m)-\OX_s^m)ds\notag\\
	& \quad +\frac{\sigma}{\gamma}\int_0^t(1-e^{-\frac{\gamma}{m}(t-s)}) D(X_s^{\alpha}(\rho^m)-\OX_s^m)dB_s\,.
	\end{align}
Here, the time $t\in[0,T]$ is deterministic and the stochastic Fubini's theorem is applicable as there holds the following integrability:
\begin{align*}
&\int_0^t\left(\EE \left[\int_0^\tau e^{-\frac{2\gamma}{m}(\tau-s)}|X_s^{\alpha}(\rho^m)-\OX_s^m|^2ds \right] \right)^\frac{1}{2}d\tau 
\\
&\leq  
\int_0^t\left(\int_0^\tau e^{-\frac{2\gamma}{m}(\tau-s)}C\EE[|\OX_s^m|^2]ds\right)^\frac{1}{2}d\tau 
	\leq (C\sup_{r\in[0,T]}\EE[|\OX_r^m|^2] )^{\frac{1}{2}}\int_0^t\left(\int_0^\tau e^{-\frac{2\gamma}{m}(\tau-s)}ds\right)^\frac{1}{2}d\tau
	\\
&
	\leq (C\sup_{r\in[0,T]}\EE[|\OX_r^m|^2] )^{\frac{1}{2}}T^\frac{1}{2}\left(\int_0^t\int_0^\tau e^{-\frac{2\gamma}{m}(\tau-s)}dsd\tau\right)^\frac{1}{2}
		=
		(C\sup_{r\in[0,T]}\EE[|\OX_r^m|^2] )^{\frac{1}{2}}T^\frac{1}{2}\left(\int_0^t\frac{m}{2\gamma}(1-e^{-\frac{2\gamma}{m}\tau})d\tau\right)^\frac{1}{2}
		\\
&
	\leq  (C\sup_{r\in[0,T]}\EE[|\OX_r^m|^2] )^{\frac{1}{2}}T(\frac{m}{2\gamma})^{\frac{1}{2}}<\infty\,,
\end{align*}
where we have used the fact that for all real $p\geq 1$,
	\begin{align}\label{Jensen}
	\EE[|X_t^{\alpha}(\rho^m)-\OX_t^m|^p]
	&=\int_{\RR^d}
	\left|\frac{\int_{\RR^d}x\omega_{\alpha}^{\mc{E}}(x)\rho^m(t,dx)}{\int_{\RR^d}\omega_{\alpha}^{\mc{E}}(x)\rho^m(t,dx)}-y\right|^p\rho^m(t,dy)
	=\int_{\RR^d}
	\left|\frac{\int_{\RR^d}(x-y)\omega_{\alpha}^{\mc{E}}(x)\rho^m(t,dx)}{\int_{\RR^d}\omega_{\alpha}^{\mc{E}}(x)\rho^m(t,dx)}\right|^p\rho^m(t,dy)\notag\\
	&\leq \frac{\int_{\RR^d}\int_{\RR^d}|x-y|^p\omega_{\alpha}^{\mc{E}}(x)\rho^m(t,dx)\rho^m(t,dy)}{\int_{\RR^d}\omega_{\alpha}^{\mc{E}}(x)\rho^m(t,dx)}\leq 2^pC_{\alpha,\TE}\EE[|\OX_t^m|^p],\quad \forall\, t\in[0,T].
	\end{align}
Please note that in the stochastic integral 
$$\frac{\sigma}{\gamma}\int_0^t(1-e^{-\frac{\gamma}{m}(t-s)}) D(X_s^{\alpha}(\rho^m)-\overline X_s^m)dB_s,$$ 
the integrand is not just a function of $s$ but also a function of $t$. Thus, the above stochastic integral is typically of the Volterra type and as a stochastic process of time $t$, it is not a local martingale except for certain trivial cases, so the  BDG and Doob's inequalities for local martingales are not applicable herein.
In what follows, we use the fact that for any sequence $\{a_i\}_{i=1}^n$ and $p\geq 1$, there holds
	\begin{equation*}
	\left(\Big| \sum_{i=1}^{n}a_i\Big|\right)^p\leq n^{p-1}\sum_{i=1}^{n}|a_i|^p\,.
	\end{equation*}
	Note that the assumption on $0< m\leq \frac{1}{2}$ ensures that $\gamma=1-m\in[\frac{1}{2},1)$, so $\frac{1}{\gamma}$ is well defined.
	% Let us define $$\Psi(t):=\sup_{0\leq t'\leq t}|\OX_{t'}^m|^2\,,$$
	It follows from H\"{o}lder's inequality that
{\small 	\begin{equation}\label{Xes}
	|\OX_t^m|^4\leq 64|\OX_0|^4+\frac{64m^4}{\gamma^4}|\OV_0|^4+\frac{64\lambda^4t^3}{\gamma^4}\int_0^t|X_s^{\alpha}(\rho^m)-\OX_s^m|^4ds+\frac{64\sigma^4}{\gamma^4}\left|\int_0^{t}(1-e^{-\frac{\gamma}{m}(t-s)}) D(X_s^{\alpha}(\rho^m)-\OX_s^m)dB_s\right|^4.
	\end{equation}}
Using the moment inequality for stochastic integrals as in \cite[Theorem 7.1]{mao2007stochastic}, we have
	\begin{align}\label{momentes}
	&\EE\left[\left|\int_0^{t}(1-e^{-\frac{\gamma}{m}(t-s)}) D(X_s^{\alpha}(\rho^m)-\OX_s^m)dB_s\right|^4\right]\nn\\
 &\leq d^3\EE\left[\sum_{k=1}^{d}\left|\int_0^{t}(1-e^{-\frac{\gamma}{m}(t-s)}) (X_s^{\alpha}(\rho^m)-\OX_s^m)_kdB_s^ke_k\right|^4\right]\nn\\
	&\leq 36d^3t\int_0^{t}\EE\left[  \sum_{k=1}^{d}|(X_s^{\alpha}(\rho^m)-\OX_s^m)_k|^4\right] ds\leq 36d^3t\int_0^{t}\EE\left[  |X_s^{\alpha}(\rho^m)-\OX_s^m|^4\right] ds\,.
	\end{align}
Thus,
	\begin{align*}
	\EE[|\OX_t^m|^4]\leq 64\EE[|\OX_0|^4]+\frac{64m^4}{\gamma^4}\EE[|\OV_0|^4]+\frac{64(\lambda^4t^3+36d^3t\sigma^4)}{\gamma^4}\int_0^t\EE[|X_s^{\alpha}(\rho^m)-\OX_s^m|^4]ds\,.
	\end{align*}
	Thus, we have
\begin{align*}
\EE[|\OX_t^m|^4]
\leq 64\EE[|\OX_0|^4]+\frac{64m^4}{\gamma^4}\EE[|\OV_0|^4]+\frac{1024C_{\alpha,\TE}(\lambda^4t^3+36d^3t\sigma^4)}{\gamma^4}\int_0^t\EE[|\OX_s^m|^4]ds\,.
\end{align*}
Using Gronwall's inequality leads to
\begin{equation}\label{esOX2}
\EE[|\OX_t^m|^4]
\leq \left(64\EE[|\OX_0|^4]+\frac{64m^4}{\gamma^4}\EE[|\OV_0|^4]\right)\exp\left(\frac{1024C_{\alpha,\TE}(\lambda^4T^3+36d^3T\sigma^4)}{\gamma^4}T\right),\quad\forall\, t\in[0,T]\,.
\end{equation}
Recalling  $0\leq m\leq \frac{1}{2}$ and $\frac{1}{\gamma}=\frac{1}{1-m}\leq 2$, from estimate \eqref{esOX2} we obtain the boundedness:
\begin{equation}\label{Sm}
\sup_{t\in[0,T]}\EE[|\OX_t^m|^4] \leq C(\EE[|\OX_0|^4],\EE[|\OV_0|^4],C_{\alpha,\TE},\lambda,d,\sigma,T)\,.
\end{equation}

Next, we consider the case when $\frac{1}{2}\leq m\leq 1$.
	It is obvious that
\begin{align}
|\OX_t^m|^4=|\OX_0|^4+4\int_0^t|\OX_s^m|^2\OX_s^m\cdot \OV_s^mds
&
\leq |\OX_0^m|^4+2\int_0^t|\OX_s^m|^2(|\OX_s^m|^2+|\OV_s^m|^2)ds
\nonumber\\
&
\leq |\OX_0^m|^4+3\int_0^t(|\OX_s^m|^4+|\OV_s^m|^4)ds\,. \label{esX}
\end{align}
Using \eqref{eqV}, arguments similar to \eqref{Xes}-\eqref{momentes} give
{\small \begin{align}\label{esV}
|\OV_t^m|^4
&\leq 64|\OV_0|^4+\frac{64\gamma^4}{m^4}\left|\int_0^t\OV_s^mds\right|^4+\frac{64\lambda^4}{m^4}\left|\int_0^t(X_s^{\alpha}(\rho^m)-\OX_s^m)ds\right|^4+\frac{64\sigma^4}{m^4}\left|\int_0^t D(X_s^{\alpha}(\rho^m)-\OX_s^m)dB_s\right|^4
\notag\\
&\leq 64|\OV_0|^4+\frac{64\gamma^4t^3}{m^4}\int_0^t|\OV_s^m|^4ds+\frac{64\lambda^4t^3}{m^4}\int_0^t|X_s^{\alpha}(\rho^m)-\OX_s^m|^4ds+\frac{64\sigma^4\cdot 36 d^3 t}{m^4}\int_0^t |X_s^{\alpha}(\rho^m)-\OX_s^m|^4ds\,.
\end{align}}
Collecting estimates \eqref{esV} and \eqref{esX}, and recalling the fact $\frac{1}{2}\leq m \leq 1$ and $0\leq \gamma\leq 1$, we have
\begin{align}
&\EE[|\OX_t^m|^4+|\OV_t^m|^4] \notag\\
\leq &64\, \EE[|\OX_0|^4+|\OV_0|^4]+ C \int_0^t\EE[|\OX_s^m|^4+|\OV_s^m|^4]ds+C 
\int_0^t\EE[|X_s^{\alpha}(\rho^m)-\OX_s^m|^4]ds\notag\\
\leq & 64\,\EE[|\OX_0|^4+|\OV_0|^4]
+ C 
(1+8C_{\alpha,\TE})\int_0^t\EE[|\OX_s^m|^4+|\OV_s^m|^4]ds
\,,
\end{align}
where the estimate \eqref{Jensen} is used in the last inequality. Applying Gronwall's inequality yields that 
\begin{align}\label{Lm}
&\EE[|\OX_t^m|^4+|\OV_t^m|^4] \leq 64\, \EE[|\OX_0|^4+|\OV_0|^4]\exp\left(C \cdot (1+8C_{\alpha,\TE})t\right),
%\leq &\EE[|\OX_0|^4+|\OV_0|^4]\exp\left(C\left(2\lambda+4\sigma^2\right)(1+8C_{\alpha,\TE})t\right)
\quad \forall\, t\in[0,T]\,.
\end{align}

Finally, combining \eqref{Sm} and \eqref{Lm} yields that
\begin{equation} \label{uniform-bd}
\sup_{m\in(0,1]} \sup_{t\in[0,T]}\EE[|\OX_t^m|^4] \leq C(\EE[|\OX_0|^4],\EE[|\OV_0|^4],C_{\alpha,\TE},\lambda,\sigma,d,T)=:C_1
\end{equation}
where the constant $C_1>0$ is independent of $m$. Therefore, for any $\varepsilon>0$, there exists a compact subset $K_\varepsilon:=\{x:~|x|^4\leq \frac{C_1}{\varepsilon}\}$ such that by Markov's inequality
\begin{equation}
\rho_t^m((K_\varepsilon)^c)=\PP(|X_t^m|^4> \frac{C_1}{\varepsilon})\leq \frac{\varepsilon\EE[|X_t^m|^4]}{C_1}\leq\varepsilon,\quad \forall ~m\in(0,1].
\end{equation}
This means that for each $t\in[0,T]$, each countable subset of $\{\rho_t^m\}_{0<m\leq 1}$ is tight, which verifies  condition $(Con 1)$ in Lemma \ref{lemAldous}.

$\bullet$ \textit{Step 2: Checking $(Con 2)$. }   Let $\beta$ be a $(\sigma(X^{m}_s;s\in[0,t]))_{t\geq 0}$-stopping time such that $\beta+\delta_0\leq T$. Without any loss of generality, we may assume that the concerned countable subsequence $\{m_k\}_{k\in\mathbb N} \subset [0,1]$ satisfies $m_k\leq \frac{1}{2}$ for all $k\in\NN$; thus, we may just consider the case of $0<m\leq \frac{1}{2}$ which indicates $\frac{1}{2}\leq \gamma<1$. 
Recall \eqref{onlyX} and compute
{\small 	\begin{align}\label{diff}
&\OX_{\beta+\delta}^m-\OX_{\beta}^m\notag\\
&=\int_\beta^{\beta+\delta}\OV_\tau d\tau=\int_\beta^{\beta+\delta}  e^{-\frac{\gamma}{m}\tau}\OV_0d\tau
+\frac{\lambda}{m}\int_\beta^{\beta+\delta}\int_0^\tau e^{-\frac{\gamma}{m}(\tau-s)}(X_s^{\alpha}(\rho^m)-\OX_s^m)dsd\tau
\notag\\
& \quad +\frac{\sigma}{m}\int_\beta^{\beta+\delta}e^{-\frac{\gamma\tau}{m}}\int_0^{\beta} e^{\frac{\gamma s}{m}}D(X_s^{\alpha}(\rho^m)-\OX_s^m)dB_s d\tau
+\frac{\sigma}{m} \int_\beta^{\beta+\delta} e^{-\frac{\gamma \tau}{m}} \int_{\beta}^{\tau}  e^{\frac{\gamma s}{m}}D(X_s^{\alpha}(\rho^m)-\OX_s^m)dB_s d\tau
 \notag\\
&=\int_\beta^{\beta+\delta}  e^{-\frac{\gamma}{m}\tau}\OV_0d\tau
+\frac{\lambda}{m}\int_0^\beta\int_\beta^{\beta+\delta} e^{-\frac{\gamma}{m}(\tau-s)}d\tau (X_s^{\alpha}(\rho^m)-\OX_s^m)ds+\frac{\lambda}{m}\int_\beta^{\beta+\delta}\int_s^{\beta+\delta} e^{-\frac{\gamma}{m}(\tau-s)}d\tau (X_s^{\alpha}(\rho^m)-\OX_s^m)ds
\notag\\
& \quad 
+\frac{\sigma}{m}\int_\beta^{\beta+\delta}e^{-\frac{\gamma \tau}{m}} d\tau \int_0^{\beta} e^{\frac{\gamma s}{m}}D(X_s^{\alpha}(\rho^m)-\OX_s^m)dB_s
+
\frac{\sigma}{m}\int_\beta^{\beta+\delta}\int_s^{\beta+\delta} e^{-\frac{\gamma}{m}(\tau-s)}d\tau D(X_s^{\alpha}(\rho^m)-\OX_s^m)dB_s \notag\\
&=\frac{m}{\gamma}(e^{-\frac{\gamma}{m}\beta}-e^{-\frac{\gamma}{m}(\beta+\delta)})\OV_0 \notag\\
&\quad +\frac{\lambda}{\gamma}\int_0^\beta (e^{-\frac{\gamma}{m}(\beta-s)}-e^{-\frac{\gamma}{m}(\beta+\delta-s)}) (X_s^{\alpha}(\rho^m)-\OX_s^m)ds
+\frac{\lambda}{\gamma}\int_\beta^{\beta+\delta} (1-e^{-\frac{\gamma}{m}(\beta+\delta-s)}) (X_s^{\alpha}(\rho^m)-\OX_s^m)ds\notag\\
& \quad 
+\frac{\sigma}{\gamma} ( e^{-\frac{\gamma \beta}{m}} -e^{-\frac{\gamma (\beta+\delta)}{m}} ) 
 \int_0^{\beta} e^{\frac{\gamma s}{m}}D(X_s^{\alpha}(\rho^m)-\OX_s^m)dB_s
+\frac{\sigma}{\gamma}\int_\beta^{\beta+\delta} (1-e^{-\frac{\gamma}{m}(\beta+\delta-s)})D(X_s^{\alpha}(\rho^m)-\OX_s^m)dB_s,
\end{align}}
where the calculations have taken into account the fact that $\beta$ is a stopping time.

Note that there holds $|e^{-x}-e^{-y}|\leq |x-y|^{\zeta}\wedge 1 $ for all $x,y\in[0,\infty)$ and $\zeta\in[0,1]$. Basic computations further indicate that for each $q\geq 1$, $\zeta\in[0,1]$, and $\tau\in[0,T]$,
\begin{align}
\int_0^{\tau}
\left| e^{-\frac{\gamma(\tau-s)}{m}} - e^{-\frac{\gamma(\tau+\delta-s)}{m}}   \right|^q \,ds
\leq
\int_0^{\tau}
\left( e^{-\frac{\gamma(\tau-s)}{m}} - e^{-\frac{\gamma(\tau+\delta-s)}{m}}  \right) \,ds
&
=
\frac{m}{\gamma} \left( 1- e^{-\frac{\gamma \delta}{m}}  \right)
-\frac{m}{\gamma} \left( e^{-\frac{\gamma \tau}{m}} -e^{-\frac{\gamma (\tau+\delta)}{m}} \right)
\nonumber\\
&\leq  \frac{m}{\gamma} \cdot \left(  \frac{\gamma\delta}{m}  \right)^{\zeta}
%\nonumber\\
%&
=\left(\frac{m }{\gamma} \right)^{1-\zeta}\delta^{\zeta},  \label{est-11}
\end{align}
and obviously, 
$$
\int_{\beta}^{\beta+\delta} \left(1-e^{-\frac{\gamma(\beta+\delta-s)}{m}}\right)^q ds \leq \int_{\beta}^{\beta+\delta} 1 \,ds =\delta.
$$
Then, it is obvious that
{\small \begin{align*}
\EE\left[\left|\frac{m}{\gamma}(e^{-\frac{\gamma}{m}\beta}-e^{-\frac{\gamma}{m}(\beta+\delta)})\OV_0\right|^2\right]
\leq \frac{m^2}{\gamma^2}\cdot \frac{\gamma^2\delta^2}{m^2}
\left(\EE[|\OV_0|^4]\right)^{\frac{1}{2}}
\leq \delta^2\left(\EE[|\OV_0|^4]\right)^{\frac{1}{2}}.
\end{align*}}
Next, it follows that
{\small \begin{align*}
&\EE\left[\left| \int_0^\beta (e^{-\frac{\gamma}{m}(\beta-s)}-e^{-\frac{\gamma}{m}(\beta+\delta-s)}) (X_s^{\alpha}(\rho^m)-\OX_s^m)ds\right|^2\right]
\notag\\
&\leq  
 \EE\left[\int_0^{\beta} |e^{-\frac{\gamma}{m}(\beta-s)}-e^{-\frac{\gamma}{m}(\beta+\delta-s)}|^2 ds\cdot \int_0^{\beta} |X_s^{\alpha}(\rho^m)-\OX_s^m|^2ds\right]\notag\\
&\leq
 \delta \cdot T \sup_{s\in[0,T]}  \left(\EE\left[    |X_s^{\alpha}(\rho^m)-\OX_s^m|^4 \right] \right)^{1/2},
\notag
%\\
%\leq&C \delta,
\end{align*}}
and analogously,
{\small \begin{align*}
\EE\left[\left| \int_\beta^{\beta+\delta} (1-e^{-\frac{\gamma}{m}(\beta+\delta-s)}) (X_s^{\alpha}(\rho^m)-\OX_s^m)ds \right|^2\right] 
&\leq
\EE\left[
\int_{\beta}^{\beta+\delta} \left(1-e^{-\frac{\gamma(\beta+\delta-s)}{m}}\right)^2 ds
\cdot \int_\beta^{\beta+\delta} |X_s^{\alpha}(\rho^m)-\OX_s^m|^2ds
\right]
\\
&\leq   \delta\cdot
\EE\left[\int_\beta^{\beta+\delta} |X_s^{\alpha}(\rho^m)-\OX_s^m|^2ds\right] \\
%\leq T\delta^{\frac{1}{2}}\EE\left[(\int_0^{T} |X_s^{\alpha}(\rho^m)-\OX_s^m|^4ds)^{\frac{1}{2}}\right]\notag\\
& \leq  \delta \cdot T \sup_{s\in[0,T]}  \left(\EE\left[    |X_s^{\alpha}(\rho^m)-\OX_s^m|^4 \right]\right)^{1/2},\,.
\end{align*}}
Further, applying It\^{o}'s isometry  gives
\begin{align}
&\EE\left[\left|\int_{\beta}^{\beta+\delta} (1-e^{-\frac{\gamma}{m}(\beta+\delta-s)})D(X_s^{\alpha}(\rho^m)-\OX_s^m)d B_s\right|^2\right]
\notag\\
&=   
\EE\left[\int_{\beta}^{\beta+\delta} |1-e^{-\frac{\gamma}{m}(\beta+\delta-s)}|^2|X_s^{\alpha}(\rho^m)-\OX_s^m|^2ds\right]
\notag\\
& \leq \left(\EE\left[\int_{\beta}^{\beta+\delta} |1-e^{-\frac{\gamma}{m}(\beta+\delta-s)}|^4 ds \right] \right)^{1/2}
\cdot \left(\EE\left[\int_{0}^{T} |X_s^{\alpha}(\rho^m)-\OX_s^m|^4ds\right]\right)^{1/2}\notag\\
&\leq  (\delta T)^{1/2} \left(\sup_{s\in[0,T]}  \EE\left[    |X_s^{\alpha}(\rho^m)-\OX_s^m|^4 \right]\right)^{1/2}. \nonumber
\end{align}

Particularly, let us look at 
$$
Z^{m,\delta}_{t}:
= ( e^{-\frac{\gamma t}{m}} -e^{-\frac{\gamma (t+\delta)}{m}} ) 
 \int_0^{t} e^{\frac{\gamma s}{m}}D(X_s^{\alpha}(\rho^m)-\OX_s^m)dB_s, \quad t\in [0,T],
$$
and try to derive an estimate on $Z^{m,\delta}_{\beta}$. Here, we note that the main difficulty in estimating $Z^{m,\delta}_{\beta}$ arises from the fact that for a general stopping time $\beta$ (in fact, unless $\beta$ is $\sigma(\OX_0,\OV_0)$-measurable in our setting), the multiplier $( e^{-\frac{\gamma \beta}{m}} -e^{-\frac{\gamma (\beta+\delta)}{m}} )$ cannot enter the stochastic integral due to the nonanticipativity required for It\^o integrals and associated moment estimates.    Basic calculations as above yield that
\begin{align}
\EE\left[
\int_0^T \Big|  Z^{m,\delta}_{t} \Big|^4 dt
 \right]
% \nonumber\\
&
=
\EE\left[
\int_0^T \Big|   
 \int_0^{t}  
 ( e^{-\frac{\gamma (t-s)}{m}} -e^{-\frac{\gamma (t+\delta-s)}{m}} )
 D(X_s^{\alpha}(\rho^m)-\OX_s^m)dB_s \Big|^4 dt
 \right]
 \nonumber\\
 &\leq C
\int_0^T \EE\left[ \Big(   
 \int_0^{t}  
 \Big|( e^{-\frac{\gamma (t-s)}{m}} -e^{-\frac{\gamma (t+\delta-s)}{m}} )
 (X_s^{\alpha}(\rho^m)-\OX_s^m)\Big|^2 ds \Big)^2 
 \right]\, dt
 \nonumber\\
  &\leq C
\int_0^T 
 \int_0^t \Big( e^{-\frac{\gamma (t-s)}{m}} -e^{-\frac{\gamma (t+\delta-s)}{m}} \Big)^4ds\cdot
 \EE\left[   
 \int_0^{t}  \Big|X_s^{\alpha}(\rho^m)-\OX_s^m\Big|^4 ds  
 \right]\, dt
 \nonumber\\
  &\leq C \, \delta  
\int_0^T 
  \EE\left[   
 \int_0^{t}  \Big|X_s^{\alpha}(\rho^m)-\OX_s^m\Big|^4 ds  
 \right]\, dt
 \nonumber\\
 &\leq
C \, \delta 
 \sup_{s\in[0,T]}  \EE\left[   |X_s^{\alpha}(\rho^m)-\OX_s^m|^4 \right], \label{est-111}
\end{align}
where we have used the so-called Burkholder-Davis-Gundy (BDG) inequality for martingales and the estimate \eqref{est-11} with $\zeta=1$ and the constant $C$ is independent of $m$ and $\delta$. Thus, the process
$$
M^{m,\delta}_t:= \int_0^t  (Z^{m,\delta}_s)' D(X_s^{\alpha}(\rho^m)-\OX_s^m)dB_s, \quad t\in[0,T],
$$
is a square-integrable continuous martingale; indeed, Doob's martingale inequality gives
\begin{align}
\EE\left[ \max_{t\in[0,T]} \left| M^{m,\delta}_t \right|^2 \right]
&\leq C\EE\left[ \int_0^T  \left| (Z^{m,\delta}_s)'  D(X_s^{\alpha}(\rho^m)-\OX_s^m) \right|^2 ds \right]
\nonumber\\
&\leq 
C \left(
\EE\left[ \int_0^T  \left|  Z^{m,\delta}_s \right|^4 ds \right] 
\EE\left[ \int_0^T  \left|  X_s^{\alpha}(\rho^m)-\OX_s^m \right|^4 ds \right] \right)^{1/2} \nonumber\\
&\leq
C  \delta^{1/2} 
 \sup_{s\in[0,T]}  \EE\left[    |X_s^{\alpha}(\rho^m)-\OX_s^m|^4 \right], \label{est-max}
\end{align}
with $C$ being independent of $m$ and $\delta$. On the other hand, it is easy to see that $Z^{m,\delta}$ satisfies the following SDE
$$
dZ^{m,\delta}_t= -\frac{\gamma}{m} Z^{m,\delta}_tdt + (1-e^{-\frac{\gamma \delta}{m}}) D(X_t^{\alpha}(\rho^m)-\OX_t^m)\,dB_t, \quad t>0;\quad Z^{m,\delta}_0=0.
$$
By It\^o-Doeblin formula, it holds that
%\begin{align*}
%|Z^{m,\delta}_t|^2+\frac{2\gamma}{m} \int_0^t |Z^{m,\delta}_s|^2ds
%=\int_0^t \Big| (1-e^{-\frac{\gamma \delta}{m}}) D(X_s^{\alpha}(\rho^m)-\OX_s^m) \Big|^2ds + 2 \int_0^t (1-e^{-\frac{\gamma \delta}{m}}) d M^{m,\delta}_s,\quad \text{a.s.}
%\end{align*}
%It then follows that 
for all $t\in[0,T]$,
\begin{align}
|Z^{m,\delta}_t|^2
&=\int_0^t \Big| ( e^{-\frac{\gamma (t-s)}{m}} -e^{-\frac{\gamma (t+\delta-s)}{m}} ) (X_s^{\alpha}(\rho^m)-\OX_s^m) \Big|^2ds 
+ 2 \int_0^t e^{-\frac{2\gamma (t-s)}{m}}(  1 -e^{-\frac{\gamma \delta}{m}}) d M^{m,\delta}_s
\nonumber\\
&\leq
\left( \int_0^t \Big| ( e^{-\frac{\gamma (t-s)}{m}} -e^{-\frac{\gamma (t+\delta-s)}{m}} \Big|^4ds   
\int_0^t \Big| X_s^{\alpha}(\rho^m)-\OX_s^m\Big|^4ds \right)^{1/2}
+2 (  1 -e^{-\frac{\gamma \delta}{m}}) M^{m,\delta}_t 
\nonumber\\
&\quad
	  - 4 \int_0^t \frac{\gamma}{m} e^{-\frac{2\gamma (t-s)}{m}}(  1 -e^{-\frac{\gamma \delta}{m}})  M^{m,\delta}_s \,ds
	\nonumber\\
&\leq
\left(   \delta 
\int_0^T \Big| X_s^{\alpha}(\rho^m)-\OX_s^m \Big|^4ds \right)^{1/2}
	+2\left| M^{m,\delta}_t \right|
	+ \frac{4\, \gamma}{m}  \max_{s\in[0,T]} \left| M^{m,\delta}_s \right| 
	 \int_0^t e^{-\frac{2\gamma (t-s)}{m}}  \,ds
\nonumber\\
&\leq
\left(   \delta 
\int_0^T \Big| X_s^{\alpha}(\rho^m)-\OX_s^m \Big|^4ds \right)^{1/2}
+4 \max_{s\in[0,T]} \left| M^{m,\delta}_s \right|  , \quad\text{a.s.,}\label{est-222}
\end{align}
where the integration by parts formula is applied to the stochastic integral in the first line and in the second inequality, we used estimate \eqref{est-11} with $\zeta$ equal to $ 1$. Combined with \eqref{est-max}, it yields that
\begin{align*}
\EE\left[ \Big|Z^{m,\delta}_{\beta} \Big|^2\right]
\leq  \EE\left[ \max_{t\in[0,T]}\Big|Z^{m,\delta}_{t} \Big|^2\right]
\leq C \left(\delta^{1/2}+ \delta^{1/4}\right) \left(  \sup_{s\in[0,T]}\EE\left[    |X_s^{\alpha}(\rho^m)-\OX_s^m|^4 \right]\right)^{1/2},
\end{align*}
where the constant $C$ is independent of $\beta,m,$ and $\delta$.

Therefore, summing up the above estimates and recalling  $0<m\leq \frac{1}{2}$, $\frac{1}{\gamma}\leq 2$, and the relations \eqref{Jensen} and \eqref{uniform-bd}, we arrive at
\begin{align}
\EE[|\OX_{\beta+\delta}^m-\OX_{\beta}^m|^2]  
& \leq {5}
\delta^2
(\EE[|\OV_0|^4])^{\frac{1}{2}} 
+\frac{5}{\gamma^2} \cdot 
\left(\lambda^2  \delta T + \sigma^2 C ( \delta^{1/2}+ \delta^{1/4} )   \right)
 \sup_{s\in[0,T]}  \left(\EE\left[    |X_s^{\alpha}(\rho^m)-\OX_s^m|^4 \right] \right)^{1/2}
\notag\\
& 
\leq
 C\left(\EE[|\OX_0|^4],\EE[|\OV_0|^4],C_{\alpha,\TE},\lambda,\sigma,d,T\right)
\left(\delta^{\frac{1}{4}} +\delta^{\frac{1}{2}}+\delta+\delta^2 \right). \nonumber
\end{align}
Hence, for any $\varepsilon>0$, $\eta>0$, there exists some $\delta_0>0$ such that for all $0<m\leq \frac{1}{2}$ it holds that
\begin{equation}
\sup_{\delta\in[0,\delta_0]}\PP(|\OX_{\beta+\delta}^m-\OX_{\beta}^m|^2\geq \eta)\leq \sup_{\delta\in[0,\delta_0]}\frac{\EE[|\OX_{\beta+\delta}^m-\OX_{\beta}^m|^2]}{\eta}\leq \varepsilon\,.
\end{equation}
This justifies condition $Con 2$ in Lemma \ref{lemAldous}.  
	\end{proof}
%Theorem \ref{thmtight} indicates that there exist a convergent subsequence of $\{\OX^m\}_{0< m\leq 1}$, which will be still denoted by $\{\OX^m\}_{0< m\leq 1}$, such that for some random variable $\OX$ it holds
%\begin{equation}
%\OX^m \rightharpoonup \OX\mbox{ in the sense of distribution}
%\end{equation}
%and we denote by $\rho:=\mc{L}(\OX)$. 

%\begin{thm}\label{thm-huang2021note}
%Let Assumption \ref{asum} hold. If $(\OX^m_0,\OV^m_0)=(\OX_0,\OV_0)$ is distributed according to $f_0$ with $f_0\in\mc{P}_4(\RR^{2d})$, then for each $m\in(0,1]$ and $T>0$, the nonlinear SDE \eqref{MVeq} admits a unique strong solution up to time $T$ with the initial data $(\OX^m_0,\OV^m_0)$, and it holds further that
%\begin{equation}\label{secmen}
%\sup\limits_{t\in[0,T]}\EE\left[|\OX_t^m|^4+|\OV_t^m|^4\right]\leq e^{CT} \cdot \EE\left[ |\OX_0|^4+|\OV_0|^4\right]\,,
%\end{equation}
%where $C$ depends only on $\lambda,m,\sigma$, and $C_{\alpha,\mc{E}}$.
%\end{thm}

In a similar way to \eqref{Jensen}, it holds that for all real $p\geq 1$ and $\tau\in[0,T]$,
	\begin{align}
	\EE\left[\sup_{t\in[0,\tau]}|X_t^{\alpha}(\rho^m)-\OX_t^m|^p\right]
	&=\EE\left[\sup_{t\in[0,\tau]}
	\left|\frac{\int_{\RR^d}(x-\OX_t^m)\omega_{\alpha}^{\mc{E}}(x)\rho^m(t,dx)}{\int_{\RR^d}\omega_{\alpha}^{\mc{E}}(x)\rho^m(t,dx)}\right|^p \right]
	\nonumber\\
	&\leq 
	\EE\left[\sup_{t\in[0,\tau]}
	 \frac{\int_{\RR^d}|x-\OX_t^m|^p\omega_{\alpha}^{\mc{E}}(x)\rho^m(t,dx)}{\int_{\RR^d}\omega_{\alpha}^{\mc{E}}(x)\rho^m(t,dx)}  \right]
	\notag\\
%	&\leq 
%	\EE\left[\sup_{t\in[0,\tau]}
%	 \frac{\int_{\RR^d}2^{p-1}\Big(|x|^p+|\OX_t^m|^p\Big)\omega_{\alpha}^{\mc{E}}(x)\rho^m(t,dx)}{\int_{\RR^d}\omega_{\alpha}^{\mc{E}}(x)\rho^m(t,dx)}  \right]
%	\notag\\
	&\leq 2^{p-1} C_{\alpha,\TE}
	\EE\left[\sup_{t\in[0,\tau]}
	  \Big(\EE[|\OX_t^m|^p]+|\OX_t^m|^p\Big)  \right]
	  \nonumber\\
	&\leq 2^pC_{\alpha,\TE}\EE\left[\sup_{t\in[0,\tau]}|\OX_t^m|^p\right]. \label{Jensen-unifm}
	\end{align}
Furthermore, the obtained uniform boundedness \eqref{uniform-bd} may be strengthened as follows. 
\begin{cor}\label{cor-est}
Let Assumption \ref{asum} hold and $(X_t^m,V_t^m)_{t\in[0,T]}$ satisfy the system \eqref{MVeq} in Theorem \ref{thmtight}. We have the following uniform boundedness
\begin{equation} \label{uniform-bdness}
\sup_{m\in(0,1]} \EE\left[\max_{t\in[0,T]}  |\OX_t^m|^4 \right] \leq C(\EE[|\OX_0|^4],\EE[|\OV_0|^4 ],C_{\alpha,\TE},\lambda,\sigma,d,T) <\infty.
\end{equation}
\end{cor}
\begin{proof}
For the unique strong solution $(\OX^m,\OV^m)$ to SDE \eqref{MVeq}, applying the standard martingale inequalities (e.g., see \cite[Chapter III, Page 110]{ikeda1989stochastic}) to the stochastic integrals in SDE \eqref{MVeq}, we may use the estimate \eqref{Jensen} with standard arguments (see \cite[Section 5 of Chapter 2]{krylov_controlled} for instance) to conclude that the solution $(\OX^m,\OV^m)$ has continuous trajectories and satisfies
\begin{equation}\label{secmen}
 \EE\left[ \max_{t\in[0,T]} \Big( |\OX_t^m|^4+|\OV_t^m|^4\Big)\right]\leq C\,  \EE\left[ |\OX_0|^4+|\OV_0|^4\right]\,,
\end{equation}
where the constant $C$ depends only on $\lambda,m,\sigma,T$, and $C_{\alpha,\mc{E}}$. Also, we recall the uniform boundedness \eqref{uniform-bd}.

When $0<m \leq \frac{1}{2}$, we may start with \eqref{Xes} and consider the involved stochastic integral:
$$
\int_0^{t}(1-e^{-\frac{\gamma}{m}(t-s)}) D(X_s^{\alpha}(\rho^m)-\OX_s^m)dB_s
=:\int_0^{t} D(X_s^{\alpha}(\rho^m)-\OX_s^m)dB_s -\theta_t, \quad t\in[0,T].
$$
The Burkholder-Davis-Gundy (BDG) inequality gives that
\begin{align}
\EE\left[
\max_{\tau\in[0,t]} \left|
\int_0^{\tau} D(X_s^{\alpha}(\rho^m)-\OX_s^m)dB_s
\right|^4
\right]
&\leq
C \, \EE\left[
\Big( \int_0^{t} \left|
X_s^{\alpha}(\rho^m)-\OX_s^m 
\right|^2ds\Big)^2
\right]
\leq CT^2C_1 <\infty. \label{est-1-nexp}
\end{align}
On the other hand, it is obvious that $\theta_t=\int_0^{t} e^{-\frac{\gamma}{m}(t-s)} D(X_s^{\alpha}(\rho^m)-\OX_s^m)dB_s$ satisfies uniquely the SDE:
\begin{align}
d\theta_t= -\frac{\gamma}{m} \theta_tdt + D(X_t^{\alpha}(\rho^m)-\OX_t^m) dB_t, \quad t>0;\quad \theta_0=0.
\end{align}
Straightforward calculation implies that
\begin{align}
\EE\left[   \left|\theta_t\right|^4 \right]
&=
\EE\left[
\Big|   
 \int_0^{t}  
  e^{-\frac{\gamma (t-s)}{m}}  
 D(X_s^{\alpha}(\rho^m)-\OX_s^m)dB_s \Big|^4
 \right]
 \nonumber\\
 &\leq C
 \EE\left[ \Big(   
 \int_0^{t}  
 \Big|  e^{-\frac{\gamma (t-s)}{m}}  
 (X_s^{\alpha}(\rho^m)-\OX_s^m)\Big|^2 ds \Big)^2 
 \right]
 \nonumber\\
 \text{(by \eqref{Jensen})}\quad &\leq C
 \int_0^t   e^{-\frac{4\gamma (t-s)}{m}}ds  \cdot   
 \int_0^t \EE\left[\left| \OX_s^m\right|^4\right]\,ds
 \nonumber\\
 \text{(by \eqref{uniform-bd})}\quad
 &\leq C\,m, \quad\forall\, t\in[0,T]. \label{est-bd-theta-26}
\end{align}
Set
$$
\xi_t=\int_0^t|\theta_s|^2(\theta_s)' D(X_s^{\alpha}(\rho^m)-\OX_s^m)dB_s,\quad t\in[0,T]\,
$$
with $(\theta_s)'$ being the transpose of $\theta_s$.
Then for each $\epsilon\in (0,1)$,  we have
\begin{align}
8\,\EE\left[  \max_{s\in[0,t]} \left|\xi_s\right| \right]
&\leq C \EE\left[ \Big(\int_0^t  |\theta_s|^6 \left|X_s^{\alpha}(\rho^m)-\OX_s^m\right|^2ds\Big)^{1/2}  \right]
\nonumber\\
\text{(by \eqref{Jensen-unifm})}\quad &\leq \frac{C}{\sqrt{\epsilon}} \EE\left[\int_0^t |\theta_s|^4ds  \right] 
+\frac{1}{4} \EE\left[  \max_{s\in[0,t]} \left|\theta_s\right|^4 \right]
+\epsilon \EE\left[  \max_{s\in[0,t]} \left|\OX_s^m\right|^4 \right]
\nonumber\\
 \text{(by \eqref{est-bd-theta-26})}\quad
 &\leq
\frac{C}{\sqrt{\epsilon}}+\frac{1}{4} \EE\left[  \max_{s\in[0,t]} \left|\theta_s\right|^4 \right]
+ \epsilon \EE\left[  \max_{s\in[0,t]} \left|\OX_s^m\right|^4 \right],\quad\forall\,t\in[0,T]. \label{est-xi}
\end{align}
By It\^o-Doeblin formula, it holds that for all $t\in[0,T]$,
\begin{align}
 |\theta_t|^4
 &= 
 \int_0^t 2 e^{-\frac{4\gamma(t-s)}{m}} \Big(|\theta_s|^2 \left|X_s^{\alpha}(\rho^m)-\OX_s^m\right|^2
 +2\, |\theta_s' D(X_s^{\alpha}(\rho^m)-\OX_s^m)|^2
 \Big)ds + 4\int_0^t e^{-\frac{4\gamma(t-s)}{m}} d\xi_s
 \nonumber\\
 &\leq 
 \frac{ C}{\epsilon}
 \left(\int_0^t e^{-\frac{4\gamma(t-s)}{m}} ds\right)^{2}    \max_{s\in[0,t]} \left|\theta_s\right|^4  
+\frac{\epsilon}{16C_{\alpha,\TE}}  \max_{s\in[0,t]} \left|X_s^{\alpha}(\rho^m)-\OX_s^m\right|^4  
  + 4\xi_t -\frac{16\gamma}{m}
  \int_0^t e^{-\frac{4\gamma(t-s)}{m}} \xi_s \,ds
 \nonumber\\
 &\leq
\frac{\tilde C m^2}{\epsilon} 
  \cdot  \max_{s\in[0,t]} \left|\theta_s\right|^4  
+\frac{\epsilon}{16C_{\alpha,\TE}}  \max_{s\in[0,t]} \left|X_s^{\alpha}(\rho^m)-\OX_s^m\right|^4  
  + 8\max_{s\in[0,t]}|\xi_s|,\quad \text{a.s.,} \label{est-theta}
\end{align}
where $\tilde C>0$ is independent of $(t,m,\epsilon)$. Choosing $\tilde m=\frac{\epsilon^{1/2}}{2\tilde C^{1/2}}\wedge \frac{1}{2}$ so that $\frac{\tilde C m^2}{\epsilon}\leq \frac{1}{4}$ and combining \eqref{Jensen-unifm}, \eqref{est-xi} and \eqref{est-theta}, we have 
\begin{align}
\EE\left[  \max_{s\in[0,t]} \left|\theta_s\right|^4 \right] 
\leq \frac{C}{\sqrt{\epsilon}} + 4\epsilon \EE\left[ \max_{s\in[0,t]} \left|\OX_s^m\right|^4\right],  \quad\forall\, m\in(0,\tilde m],
\end{align}
which together with \eqref{est-1-nexp} and \eqref{uniform-bd} inserted into  \eqref{Xes} implies that
\begin{align}
\EE\left[\max_{s\in[0,t]} \big|\OX_s^m\big|^4\right]
\leq C+ \frac{C}{\sqrt{\epsilon}} + C \epsilon \EE\left[  \max_{s\in[0,t]} \left|\OX_s^m\right|^4\right],\quad\forall\,t\in[0,T],
\end{align}
where the constant $C>0$ is independent of $(t, m,\epsilon)$. Therefore, by choosing $\epsilon=\frac{1}{2\,C}$ and setting $\tilde m$ accordingly, we  obtain the desired uniform boundedness for $m\in(0,\tilde m]$. When $\tilde m<m<1$, the proof is standard (see \cite[Section 5 of Chapter 2]{krylov_controlled} for instance) and it is omitted.
\end{proof}

 Next we shall identify the limit process, before which we recall a lemma on the stability estimate of the nonlinear term $X^\alpha(\rho)$.
\begin{lem}{\cite[Lemma 3.2]{carrillo2018analytical}}\label{lemsta} 
	Assume that $\rho,\hat \rho\in\mc{P}_4(\RR^{d})$. Then the following stability estimate holds
	\begin{equation}\label{lemstaeq}
	|X^\alpha(\rho)-X^\alpha(\widehat \rho)|\leq CW_2(\rho,\widehat \rho)\,,
	\end{equation}
	where $W_2$ is the $2$-Wasserstein distance, and $C$ depends only on $\alpha,L$,  $\int_{\RR^d}|x|^4\rho(dx)$, and $\int_{\RR^d}|x|^4\hat\rho(dx)$.
\end{lem}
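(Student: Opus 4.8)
The plan is to reduce everything to controlling a numerator, a denominator, and their ratio. Write $X^\alpha(\rho)=M_\rho/m_\rho$ with $M_\rho:=\int_{\RR^d}x\,\omega_\alpha^\TE(x)\,\rho(dx)$ and $m_\rho:=\int_{\RR^d}\omega_\alpha^\TE(x)\,\rho(dx)$, and likewise for $\hat\rho$. The elementary identity
\begin{equation*}
X^\alpha(\rho)-X^\alpha(\hat\rho)=\frac{M_\rho-M_{\hat\rho}}{m_\rho}+X^\alpha(\hat\rho)\,\frac{m_{\hat\rho}-m_\rho}{m_\rho}
\end{equation*}
then shows that it suffices to obtain (a) a strictly positive lower bound on $m_\rho$ (and $m_{\hat\rho}$); (b) bounds $|M_\rho-M_{\hat\rho}|\le C\,W_2(\rho,\hat\rho)$ and $|m_\rho-m_{\hat\rho}|\le C\,W_2(\rho,\hat\rho)$; and (c) an a priori bound on $|X^\alpha(\hat\rho)|$, all with constants of the asserted form.

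For (a), Assumption \ref{asum}(1) applied with $y=0$ gives $|\TE(x)-\TE(0)|\le L|x|^2$, hence $\omega_\alpha^\TE(x)\ge e^{-\alpha\TE(0)}e^{-\alpha L|x|^2}$. Restricting the integral defining $m_\rho$ to a ball $B_R$ and using Markov's inequality, $\rho(B_R)\ge 1-R^{-4}\int|x|^4\rho(dx)\ge\tfrac12$ as soon as $R^4=2\int|x|^4\rho(dx)$, so that $m_\rho\ge\tfrac12 e^{-\alpha\TE(0)}e^{-\alpha LR^2}$, a quantity depending only on $\alpha$, $L$ and $\int|x|^4\rho(dx)$ (under Assumption \ref{asum}(2) the cruder $m_\rho\ge e^{-\alpha\overline{\TE}}$ is also available, but the moment-dependent form matches the statement). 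The bound (c) follows at once: $|X^\alpha(\hat\rho)|\le m_{\hat\rho}^{-1}e^{-\alpha\underline{\TE}}\big(\int|y|^4\hat\rho(dy)\big)^{1/4}$.

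For (b), take an optimal coupling $\pi\in\mc{P}(\RR^d\times\RR^d)$ of $(\rho,\hat\rho)$, so that $\int|x-y|^2\,\pi(dx,dy)=W_2(\rho,\hat\rho)^2$. The inequality $|e^{-a}-e^{-b}|\le|a-b|\max(e^{-a},e^{-b})$ together with Assumption \ref{asum} yields $|\omega_\alpha^\TE(x)-\omega_\alpha^\TE(y)|\le\alpha L\,e^{-\alpha\underline{\TE}}(|x|+|y|)\,|x-y|$, and then
\begin{equation*}
|x\,\omega_\alpha^\TE(x)-y\,\omega_\alpha^\TE(y)|\le|x|\,|\omega_\alpha^\TE(x)-\omega_\alpha^\TE(y)|+e^{-\alpha\underline{\TE}}|x-y|\,.
\end{equation*}
Writing $M_\rho-M_{\hat\rho}=\int\big(x\,\omega_\alpha^\TE(x)-y\,\omega_\alpha^\TE(y)\big)\,\pi(dx,dy)$ and $m_\rho-m_{\hat\rho}=\int\big(\omega_\alpha^\TE(x)-\omega_\alpha^\TE(y)\big)\,\pi(dx,dy)$, Cauchy--Schwarz peels off the factor $\big(\int|x-y|^2\,\pi(dx,dy)\big)^{1/2}=W_2(\rho,\hat\rho)$, while the leftover factors $\big(\int|x|^2(|x|+|y|)^2\,\pi(dx,dy)\big)^{1/2}$ and $\big(\int(|x|+|y|)^2\,\pi(dx,dy)\big)^{1/2}$ are bounded in terms of $\int|x|^4\rho(dx)$ and $\int|y|^4\hat\rho(dy)$ via $(|x|+|y|)^2\le2|x|^2+2|y|^2$ and Cauchy--Schwarz on the cross term. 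Substituting (a)--(c) into the identity above gives $|X^\alpha(\rho)-X^\alpha(\hat\rho)|\le C\,W_2(\rho,\hat\rho)$ with $C$ depending only on $\alpha$, $L$, $\int|x|^4\rho(dx)$ and $\int|x|^4\hat\rho(dx)$.

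I expect the only genuinely delicate point to be the lower bound (a) on the denominator: this is precisely where the local Lipschitz / quadratic-growth structure of $\TE$ from Assumption \ref{asum}(1) is indispensable, and it is what forces $C$ to depend on the fourth moments of $\rho$ and $\hat\rho$. Everything else is a routine coupling computation. Since the statement is quoted from \cite[Lemma 3.2]{carrillo2018analytical}, this is in essence the argument given there, adapted to the present hypotheses; in our application the fourth moments are moreover uniformly bounded by Theorem \ref{thm-huang2021note}, so $C$ is effectively independent of $m$.
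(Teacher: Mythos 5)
Your proposal is correct, and since the paper does not prove Lemma \ref{lemsta} but simply quotes it from \cite[Lemma~3.2]{carrillo2018analytical}, the relevant comparison is with the original reference: your decomposition into numerator/denominator, the Markov-inequality lower bound on $m_\rho$ via the quadratic-growth consequence of Assumption~\ref{asum}(1), and the optimal-coupling plus Cauchy--Schwarz estimates on $M_\rho-M_{\hat\rho}$ and $m_\rho-m_{\hat\rho}$ are precisely the argument given there. One small remark: under Assumption~\ref{asum}(2) the denominator satisfies the simpler bound $m_\rho\ge e^{-\alpha\overline\TE}$, so in this paper's setting the fourth-moment dependence of $C$ enters only through the coupling estimates for $|M_\rho-M_{\hat\rho}|$, $|m_\rho-m_{\hat\rho}|$ and the bound on $|X^\alpha(\hat\rho)|$; your moment-based lower bound on $m_\rho$ is the one needed when boundedness of $\TE$ is not assumed, as in the original reference.
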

%The above lemma leads to
%\begin{equation}
%\sup_{t\in[0,T]}|X_t^\alpha(\rho^m)-X_t^\alpha( \rho)|^2\leq C\sup_{t\in[0,T]}W_2^2(\rho_t^m, \rho_t)\leq C\sup_{t\in[0,T]}\EE[|\OX_t^m-\OX_t|^2]\to 0,\quad \mbox{ as }m\to 0\,.
%\end{equation}
%Let us recall the equation of $\OX^m$ in \eqref{Fubini}
%\begin{align}
%\OX_t^m&=\OX_0+\frac{m}{\gamma}(1-e^{-\frac{\gamma}{m}t})\OV_0
%+\frac{\lambda}{\gamma}\int_0^t(1-e^{-\frac{\gamma}{m}(t-s)}) (X_s^{\alpha}(\rho^m)-\OX_s^m)ds\notag\\
%& \quad +\frac{\sigma}{\gamma}\int_0^t(1-e^{-\frac{\gamma}{m}(t-s)}) D(X_s^{\alpha}(\rho^m)-\OX_s^m)dB_s\,,
%\end{align}
%and let $m\to 0$ and $\gamma=1-m\to 1$, which yields
%\begin{align}\label{MVCBO1}
%\OX_t=\OX_0
%+\lambda\int_0^t(X_s^{\alpha}(\rho)-\OX_s)ds +\sigma\int_0^t D(X_s^{\alpha}(\rho)-\OX_s)dB_s\,.
%\end{align}

\begin{thm}[Zero-inertia limit]\label{thmzero-limit} 
	Let Assumption \ref{asum} hold and $(X_t^m,V_t^m)_{t\in[0,T]}$ satisfy the system \eqref{MVeq}. Then as $m\rightarrow 0^+$, the sequence of  stochastic processes $\{\OX^m\}_{0< m\leq 1}$  converge weakly to $\overline X$, which is the unique solution to the following SDE:
	\begin{align}\label{MVCBO1}
\OX_t=\OX_0
+\lambda\int_0^t(X_s^{\alpha}(\rho)-\OX_s)ds +\sigma\int_0^t D(X_s^{\alpha}(\rho)-\OX_s)dB_s\,.
\end{align}
Moreover,  we have the following convergences:
\begin{equation}\label{est-convegence-m}
\EE\left[\max_{t\in[0,T]}|\OX^{m}_t-\OX_t|^2\right]\leq C\, \sqrt{m} ,\quad
\max_{t\in[0,T]}\EE\left[ |\OX^{m}_t-\OX_t|^2\right] \leq C\, m,
\end{equation}
where the constant $C$ depends on $\EE[|\OX_0|^4],\EE[|\OV_0|^4],C_{\alpha,\TE},\lambda,\sigma,d$,  and $T$.
\end{thm}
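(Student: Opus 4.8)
The plan is to establish the quantitative bound \eqref{est-convegence-m} directly, by a synchronous coupling together with a Gronwall argument, and then to promote the resulting convergence of finite-dimensional distributions to weak convergence in $\mc{C}([0,T];\RR^d)$ with the help of the tightness already proven in Theorem \ref{thmtight}. To set up the coupling, let $\OX$ be the unique solution of \eqref{MVCBO1} furnished by Theorem \ref{thm-huang2021note}(ii), realized on the same filtered probability space as $\{\OX^m\}$, driven by the same Brownian motion $B$ and issued from the same initial datum $\OX_0$. Since only the regime $m\to 0^+$ is relevant, we may restrict to $0<m\le\tfrac12$, so that $\gamma=1-m\in[\tfrac12,1)$ and the stochastic Fubini representation \eqref{Fubini} is available; write $D^m_t:=\OX^m_t-\OX_t$, so $D^m_0=0$.

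First I would subtract \eqref{MVCBO1} from \eqref{Fubini} and split $D^m_t$ into four groups of terms: (i) the relaxation remainder $\tfrac{m}{\gamma}(1-e^{-\gamma t/m})\OV_0$; (ii) the coefficient-mismatch terms carrying the factor $\tfrac1\gamma-1=\tfrac{m}{1-m}\le 2m$, namely $(\tfrac1\gamma-1)\lambda\int_0^t(X_s^{\alpha}(\rho)-\OX_s)\,ds$ and its diffusion analogue; (iii) the memory terms $\tfrac{\lambda}{\gamma}\int_0^t e^{-\gamma(t-s)/m}(X_s^{\alpha}(\rho^m)-\OX_s^m)\,ds$ and $\tfrac{\sigma}{\gamma}\int_0^t e^{-\gamma(t-s)/m}D(X_s^{\alpha}(\rho^m)-\OX_s^m)\,dB_s$; and (iv) the genuine Gronwall terms $\lambda\int_0^t\big[(X_s^{\alpha}(\rho^m)-\OX_s^m)-(X_s^{\alpha}(\rho)-\OX_s)\big]\,ds$ and $\sigma\int_0^t\big[D(X_s^{\alpha}(\rho^m)-\OX_s^m)-D(X_s^{\alpha}(\rho)-\OX_s)\big]\,dB_s$. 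Using \eqref{uniform-bd}, the inequality \eqref{Jensen}, and $\int_0^t e^{-\gamma(t-s)/m}\,ds\le\tfrac{m}{\gamma}$, a direct estimate shows that groups (i) and (ii) together with the drift component of (iii) are all $O(m^2)$ in $L^2$; by It\^o's isometry and $\int_0^t e^{-2\gamma(t-s)/m}\,ds\le\tfrac{m}{2\gamma}$, the stochastic component of (iii) is $O(m)$, and it is this term that pins down the rate. For group (iv), Lemma \ref{lemsta} gives $|X_s^{\alpha}(\rho^m)-X_s^{\alpha}(\rho)|\le C\,W_2(\rho^m_s,\rho_s)\le C\big(\EE|\OX^m_s-\OX_s|^2\big)^{1/2}$, with a constant that stays independent of $m$ thanks to the uniform fourth-moment bound \eqref{uniform-bd}, so that (using It\^o's isometry for the stochastic part) group (iv) is controlled in $L^2$ by $C\int_0^t\EE|D^m_s|^2\,ds$. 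Collecting the four groups yields $\EE|D^m_t|^2\le Cm+C\int_0^t\EE|D^m_s|^2\,ds$ for all $t\in[0,T]$, and Gronwall's inequality gives \eqref{est-convegence-m}.

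It then remains to upgrade this to weak convergence on the path space. Estimate \eqref{est-convegence-m} implies that, for any finite collection of times $0\le t_1<\cdots<t_k\le T$, the vector $(\OX^{m}_{t_1},\dots,\OX^{m}_{t_k})$ converges to $(\OX_{t_1},\dots,\OX_{t_k})$ in $L^2$, hence in law, as $m\to 0^+$. On the other hand, by Theorem \ref{thmtight} every sequence $m_k\downarrow 0$ admits a subsequence along which $\{\OX^{m_k}\}$ converges weakly in $\mc{C}([0,T];\RR^d)$; since the finite-dimensional distributions of any such weak limit must agree with those of $\OX$, and finite-dimensional distributions determine a probability law on $\mc{C}([0,T];\RR^d)$, every subsequential weak limit equals the law of $\OX$. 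Consequently the whole family $\{\OX^m\}_{0<m\le1}$ converges weakly to $\OX$ as $m\to0^+$, which completes the proof.

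The step I expect to demand the most care is the bookkeeping in group (iii): one must verify that the stochastic memory integral $\tfrac{\sigma}{\gamma}\int_0^t e^{-\gamma(t-s)/m}D(X_s^{\alpha}(\rho^m)-\OX_s^m)\,dB_s$ is genuinely $O(m)$ while every other non-Gronwall contribution is $O(m^2)$, since this is exactly what determines the order in \eqref{est-convegence-m}; equally delicate is closing the Gronwall loop, which hinges on the stability estimate of Lemma \ref{lemsta} for the nonlocal term $X^{\alpha}(\cdot)$ having an $m$-uniform constant, itself guaranteed by the uniform fourth-moment bound \eqref{uniform-bd}.
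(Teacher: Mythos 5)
Your argument is correct and rests on the same technical core as the paper: the stochastic--Fubini representation \eqref{Fubini}, the uniform fourth-moment bound \eqref{uniform-bd}, the stability estimate of Lemma~\ref{lemsta}, a term-by-term comparison of the two SDEs, and a Gronwall closure, with Theorem~\ref{thmtight} supplying tightness. The logical order, however, is genuinely reorganised and arguably cleaner: you prove the quantitative bound \eqref{est-convegence-m} directly via synchronous coupling and then obtain weak convergence in $\mc{C}([0,T];\RR^d)$ from the standard criterion of tightness plus convergence of finite-dimensional distributions, whereas the paper first extracts a weakly convergent subsequence, applies Skorokhod's representation to get an almost-sure limit $\widehat X$, and only afterwards uses the same Gronwall bound to identify $\widehat X=\OX$. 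Your route bypasses the Skorokhod detour at no cost, since the Gronwall estimate is anyway a synchronous-coupling statement on the original space. One small bookkeeping caveat: the four groups you list do not literally sum to $\OX^m_t-\OX_t$; carrying out the algebra leaves a residual cross term proportional to $(\tfrac1\gamma-1)\int_0^t\big[(X_s^{\alpha}(\rho^m)-\OX_s^m)-(X_s^{\alpha}(\rho)-\OX_s)\big]\,ds$, which is $O(m)$ times a Gronwall integrand and therefore harmlessly absorbed, but the split should be written out exactly. Otherwise your size accounting is precisely the paper's: the stochastic memory integral, controlled through $\int_0^t e^{-2\gamma(t-s)/m}\,ds\le\tfrac{m}{2\gamma}$, is the $O(m)$ bottleneck, the remaining non-Gronwall pieces are $O(m^2)$ in $L^2$-squared, and the $m$-uniformity of the Gronwall constant indeed hinges on Lemma~\ref{lemsta} with the $m$-uniform fourth moments from \eqref{uniform-bd} and \eqref{secmen-CBO}.
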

\begin{rmk}
	By \eqref{est-convegence-m}, it follows from the definition of Wasserstein distance that
	\begin{equation}
	\sup_{t\in[0,T]}W_2^2(\rho^m_t,\rho_t)\leq  \max_{t\in[0,T]}\EE\left[|\OX^m_t-\OX_t|^2\right]\leq C\, m \,,
	\end{equation}
	which is consistent with the result obtained in \cite[Theorem 1.3]{choi2020quantified}, where the authors obtained a quantified overdamped limit  (with the same rate $m$) of the singular Vlasov-Poisson-Fokker-Planck system to the aggregation-diffusion equation. Besides, the obtained (strong) convergence of $\OX^m$ to $ \OX$ in the path space $\mc{C}([0,T];\RR^d)$ implies and is obviously stronger than the convergence of $\{\rho^m_t \}_{m>0}$ to $ \rho_t$ for each time $t\geq 0$. 
\end{rmk}

\begin{proof}
By Theorem \ref{thmtight},  each subsequence $\{\OX^{m_k}\}_{k\in\mathbb N}$ with $m_k\leq 1/2$  converging to $0$ as $k\rightarrow \infty$  admits a subsequence (denoted w.l.o.g. by itself) that converges weakly.
By Skorokhod's representation theorem (see \cite[Theorem 6.7 on page
70]{billingsley1999convergence}),   we may find a common probability space $(\Omega,\mc{F},\PP)$ on which the quadruple $\{(\OX_0,\OX^{m_k},\OX,B)\}_{k\in\mathbb N}$ converge to some $(\widehat X_0,\widehat X,\OX, B)$ as random variables valued in $\RR^d\times \mc{C}([0,T];\RR^{3d})$ almost surely. Here $B$ is an identical $d$-dimensional Wiener process on $(\Omega,\mc{F},\PP)$. In particular, we have 
\begin{align}
\mathbb P\left( \lim_{k\rightarrow \infty} \|\OX^{m_k}-\widehat X\|_0 =0 \right)=1,\label{converge-as}
\end{align}
where we recall the uniform norm $\|\OX^{m_k}-\widehat X\|_0=\max_{t\in[0,T]} |\OX^{m_k}_t-\widehat X_t| $. It is obvious that $\OX_0=\widehat X_0$ a.s. In the following, we shall verify that the limit $\widehat X$ is indeed the unique solution $\OX$ to SDE \eqref{MVCBO1}.

Recall that the unique strong solution to an SDE may be regarded as a function of the initial value and the driving Wiener process (see \cite[Chapter IV]{ikeda1989stochastic}). Thus, due to the uniqueness and existence of strong solution to SDE \eqref{Fubini} in Theorem \ref{thm-huang2021note},  on the above probability space $(\Omega,\mc{F},\PP)$ we must have
\begin{align}
\OX_t^{m_k}&=\OX_0+\frac{m_k}{\gamma}(1-e^{-\frac{\gamma}{{m_k}}t})\OV_0
+\frac{\lambda}{\gamma}\int_0^t(1-e^{-\frac{\gamma}{{m_k}}(t-s)}) (X_s^{\alpha}(\rho^{m_k})-\OX_s^{m_k})ds\notag\\
& \quad +\frac{\sigma}{\gamma}\int_0^t(1-e^{-\frac{\gamma}{{m_k}}(t-s)}) D(X_s^{\alpha}(\rho^{m_k})-\OX_s^{m_k})dB_s\,. \label{eq-X-mk}
\end{align}
%Comparing the above SDE with \eqref{MVCBO1}, we may justify the convergence term by term. First, it is straightforward to have
%$$
%\lim_{k\rightarrow \infty}\frac{m_k}{\gamma}(1-e^{-\frac{\gamma}{m_k}t})\OV_0
%=\lim_{k\rightarrow \infty}\frac{m_k}{1-m_k}(1-e^{-\frac{1-m_k}{m_k}t})\OV_0
%=0.
%$$
By the estimates in Corollary \ref{cor-est} and Fatou's lemma,  
there exists a constant $C_2 $ such that
\begin{equation}\label{estimate-L4-bd}  
\sup_{k\in\mathbb N} \EE \left[\max_{t\in[0,T]}|\OX_t^{m_k}|^4\right]  +
  \EE\left[\max_{t\in[0,T]}\left|\widehat X_t\right|^4 \right] \leq  C_2:=C(\EE[|\OX_0|^4],\EE[|\OV_0|^4],C_{\alpha,\TE},\lambda,\sigma,d,T)<\infty.
\end{equation}
As a straightforward consequence of the above boundedness, it holds that
\begin{align}
\sup_{k\in \NN, t\in[0,T]} \PP(|\OX_t^{m_k} -\widehat X_t| > A)
\leq \frac{2^4C_2}{A^4},\quad \forall\, A>0.
\end{align}
Thus, the dominated convergence theorem gives that for each $A>0$,
\begin{align}
&\limsup_{k\rightarrow \infty}\EE\left[\int_0^T |\OX_t^{m_k}-\widehat X_t|^2\,dt \right] 
\nonumber\\
\leq&
\limsup_{k\rightarrow \infty}\left(\EE\left[\int_0^T |\OX_t^{m_k}-\widehat X_t|^2 \wedge A^2\,dt \right]
+ \EE \left[\int_0^T |\OX_t^{m_k}-\widehat X_t|^2 1_{\{|\OX_t^{m_k}-\widehat X_t|>A\}}\,dt \right]  \right)
\nonumber \\
\leq&
\limsup_{k\rightarrow \infty}\EE\left[\int_0^T |\OX_t^{m_k}-\widehat X_t|^2 \wedge A^2\,dt \right]
+  
T\cdot \sup_{k\in\mathbb N}   \left(\EE \left[\max_{t\in[0,T]}|\OX_t^{m_k}-\widehat X_t|^4\right] \right)^{1/2}   \left|  \PP(|\OX_t^{m_k} -\widehat X_t| > A) \right|^{1/2}
\nonumber
\\
\leq&
\limsup_{k\rightarrow \infty}\EE\left[\int_0^T |\OX_t^{m_k}-\widehat X_t|^2 \wedge A^2\,dt \right]
+  \frac{2^4 \, C_2 T}{A^2}
\nonumber
\\
=& \frac{2^4 \, C_2 T}{A^2}, \nonumber
\end{align}
which by the arbitrariness of $A>0$ indicates that 
\begin{equation} \label{limit-zero}
\lim_{k\rightarrow \infty}\EE\left[\int_0^T |\OX_t^{m_k}-\widehat X_t|^2\,dt \right] 
=0.
\end{equation}
%This boundedness allows us to use  Mazur’s lemma to choose a sequence of convex combinations of   $\OX^{m_k}$, converging to $\widehat X$ in $L^2(\Omega\times[0,T];\RR^d)$, i.e., there exist $J(k)\geq k$ for $k\in\NN$, and $\{w_j^k;\,j,k=1,2,\dots\}\subset [0,1]$ such that $\sum_{j=k}^{J(k)} w_j^k =1$ for each $k\in\NN$ and 
%\begin{align}
%\widehat X^{m_k}:=\sum_{j=k}^{J(k)} w_j^k \OX^{m_k} \quad \longrightarrow \quad \widehat X \text{ in }L^2(\Omega\times[0,T];\RR^d). \label{convergence-strong}
%\end{align}
%This together with the point-wise convergence in \eqref{converge-as} indicates by the dominated convergence theorem that
%\begin{align}\label{domi}
%\lim_{k\rightarrow \infty}\EE\left[
%\int_0^T \left| \OX_t^{m_k} \wedge N -\widehat X_t \wedge N   \right|^2 ds
% \right] =0,
%\end{align}
%for each $N>0$.
Letting $\rho(t,dx)$  be the probability distribution of $\widehat X_t$ for $t\in[0,T]$, we have
$$
|{X}_{t}^{\alpha}(\rho)|=\left|\frac{\int_{\RR^d}x\omega_{\alpha}^{\mc{E}}(x)\rho(t,dx)}{\int_{\RR^d}\omega_{\alpha}^{\mc{E}}(x)\rho(t,dx)}\right| \leq C_{\alpha,\TE}\int_{\RR^d}|x|\rho(t,dx)\leq C_{\alpha,\TE}(\EE[|\widehat X_t|^4])^{\frac{1}{4}},
$$
and
\begin{equation}\label{Xalbound}
\sup_{k\in\mathbb N} \sup_{t\in[0,T]}|{X}_{t}^{\alpha}(\rho^{m_k})|\leq  C_{\alpha,\TE}(C_2)^{\frac{1}{4}}, \quad \text{ and }\quad
\sup_{t\in[0,T]} |{X}_{t}^{\alpha}(\rho)|\leq  C_{\alpha,\TE}(C_2)^{\frac{1}{4}}\,.
\end{equation}

Then we compute the limit of \eqref{eq-X-mk} term by term. By Lemma \ref{lemsta}, it holds that
\begin{equation*}
 |X_t^\alpha(\rho^{m_k})-X_t^\alpha( \rho)|^2\leq C W_2^2(\rho_t^{m_k}, \rho_t)\leq C \EE[|\OX_t^{m_k}-\widehat X_t|^2],
\end{equation*}
and thus, by recalling that $1\geq\gamma=1-m_k\geq \frac{1}{2}$, we have for each $t\in[0,T]$,
\begin{align}
&\EE \left[ \max_{\tau\in[0,t]}\left|
\frac{\lambda}{\gamma}
 \int_0^{\tau}(1-e^{-\frac{\gamma}{{m_k}}(t-s)}) (X_s^{\alpha}(\rho^{m_k})-\OX_s^{m_k})ds
- \lambda \int_0^{\tau}  (X_s^{\alpha}(\rho)-\widehat X_s)ds \right|^2\right]
\nonumber \\
& \leq 
2t \EE \left[  
\frac{\lambda}{1-m_k}
 \int_0^t\left|(1-e^{-\frac{1-m_k}{{m_k}}(t-s)}) (X_s^{\alpha}(\rho^{m_k})- X_s^{\alpha}(\rho) + \widehat X_s-\OX_s^{m_k}) \right|^2ds   \right]
 \nonumber \\
 &\quad
 	+2t \EE \left[  
 {\lambda} 
 \int_0^t\left| \left(\frac{1-e^{-\frac{1-m_k}{{m_k}}(t-s)}}{1-m_k} -1\right) (X_s^{\alpha}(\rho)- \widehat X_s)\right|^2ds  
\right]
\nonumber \\
&\leq
C  \EE \left[ \int_0^t  \left|   \widehat X_s-\OX_s^{m_k} \right|^2 ds \right]
+ C 
 {\lambda^2}  \int_0^t \left|\frac{1-e^{-\frac{1-m_k}{{m_k}}(t-s)}}{1-m_k} -1\right|^2ds 
 \cdot \EE \left[  \int_0^T \left|X_s^{\alpha}(\rho)- \widehat X_s\right|^2  ds 
\right] 
\nonumber \\
&\leq
C \EE \left[ \int_0^t  \left|   \widehat X_s-\OX_s^{m_k} \right|^2 ds \right]
+ C 
  \int_0^t \left|\frac{1-e^{-\frac{1-m_k}{{m_k}}(t-s)}-(1-m_k)}{1-m_k} \right|^2ds  
 \nonumber \\
 &\leq 
 C \EE \left[ \int_0^t  \left|   \widehat X_s-\OX_s^{m_k} \right|^2 ds \right]
+ C 
  \int_0^t \left( \left|{m_k}\right|^2  +  e^{-\frac{2(1-m_k)}{{m_k}}(t-s)} \right) ds 
  \nonumber \\
  & \leq
  C \EE \left[ \int_0^t  \left|   \widehat X_s-\OX_s^{m_k} \right|^2 ds \right]
+ C 
   \left( t \left|{m_k}\right|^2  + \frac{m_k}{2(1-m_k)}   \right)  , \label{est-drift}
 %\\
 %&\rightarrow 0,\quad \text{as }k\rightarrow \infty,
\end{align}
where the constants $C$s are independent of $(k,t)$.

For the stochastic integrals, using Doob's inequality gives 
\begin{align}
&\EE \left[ \max_{\tau\in[0,t]}\left|
\frac{\sigma}{\gamma}
 \int_0^{\tau} D(X_s^{\alpha}(\rho^{m_k}_s)-\OX_s^{m_k})dB_s
- \sigma \int_0^{\tau}  D(X_s^{\alpha}(\rho)-\widehat X_s)dB_s \right|^2\right]
\nonumber \\
& \leq C
 \EE \left[ 
 \int_0^t \left| \frac{1 } {\gamma}   (X_s^{\alpha}(\rho^{m_k})-\OX_s^{m_k}) 
-  (X_s^{\alpha}(\rho)-\widehat X_s)\right|^2 ds \right]
\nonumber \\
&\leq
C
\EE \left[ 
 \int_0^t \frac{1} {\gamma^2} \left|    (X_s^{\alpha}(\rho^{m_k})-\OX_s^{m_k}) -     (X_s^{\alpha}(\rho)-\widehat X_s)\right|^2\,ds\right]
+  C\EE \left[ 
 \int_0^t \left|
	  \left(
	  \frac{1 } {\gamma} -1
	  \right)
	  (X_s^{\alpha}(\rho)-\widehat X_s)\right|^2 ds \right]
\nonumber \\
&\leq
C
\EE \left[ 
 \int_0^t \left|     \OX_s^{m_k}     -\widehat X_s  \right|^2\,ds\right]
 +C |m_k|^2
 , \quad \forall\,t\in[0,T]. \label{converge-m10}
\end{align}
Meanwhile, putting
$$
Z^k_t:=\frac{\sigma}{\gamma} \int_0^{t} e^{-\frac{\gamma}{{m_k}}(t-s)} D(X_s^{\alpha}(\rho^{m_k})-\OX_s^{m_k})dB_s,\quad t\in[0,T],
$$
we use It\^o's isometry and the relations \eqref{Jensen-unifm} and \eqref{estimate-L4-bd} to obtain
\begin{align}
\max_{t\in[0,T]}\EE \left[\left| Z^k_t \right|^2\right]
&=\max_{t\in[0,T]} \EE\left[
\frac{\sigma^2}{\gamma^2}   \int_0^{t} 
e^{-\frac{2 \gamma}{m_k}(t-s)} 
\left| X_s^{\alpha}(\rho^{m_k})-\OX_s^{m_k}\right|^2\,ds
\right]
\nonumber\\
&\leq \max_{t\in[0,T]}
\frac{\sigma^2}{\gamma^2} 
\left(\sup_{s\in[0,T]}\EE\left[  \left| X_s^{\alpha}(\rho^{m_k})-\OX_s^{m_k}\right|^4 \right]\right)^{1/2}
  \int_0^{t} 
e^{-\frac{2 \gamma}{m_k}(t-s)} ds 
\nonumber\\
&\leq C \, m_k,\label{est-243-Z}
\end{align}
where $C$ is independent of $(m_k,t)$. Then the process
\begin{align}
\xi_t:=\int_0^t (Z^k_s)' D(X_s^{\alpha}(\rho^{m_k})-\OX_s^{m_k})dB_s,\quad t\in[0,T]\,\label{def-xi-t}
\end{align}
with $(Z^k_s)'$ being the transpose of $Z^k_s$,
defines a continuous martingale with
\begin{align}
\EE\left[  \max_{s\in[0,T]} \left|\xi_s\right| \right]
&\leq C \EE\left[ \Big(\int_0^T  |Z^k_s|^2 \left|X_s^{\alpha}(\rho^{m_k})-\OX_s^{m_k}\right|^2ds\Big)^{1/2}  \right]
\nonumber\\
&\leq C \left( \EE\left[\int_0^T |Z^k_s|^2ds  \right] \right)^{1/2}
 \cdot \left(\EE\left[ \sup_{s\in[0,T]} \left|X_s^{\alpha}(\rho^{m_k})-\OX_s^{m_k}\right|^4 \right] \right)^{1/4}
\nonumber\\
\text{(by \eqref{Jensen-unifm}, \eqref{estimate-L4-bd}, and \eqref{est-243-Z})}\quad &\leq C \, |m_k|^{1/2}. \label{est-martingale-Z}
\end{align}
Using the It\^o-Doeblin formula as in \eqref{est-theta} gives that
\begin{align}
 |Z^k_t|^2
 &= 
 \int_0^t  e^{-\frac{2\gamma(t-s)}{m_k}}  \left|X_s^{\alpha}(\rho^{m_k})-\OX_s^{m_k}\right|^2
  ds 
  + 2\int_0^t e^{-\frac{2\gamma(t-s)}{{m_k}}} d\xi_s
  \label{Ito-Doeblin}\\
 &\leq  
 C \int_0^t e^{-\frac{2\gamma(t-s)}{m_k}} ds \cdot     \sup_{s\in[0,t]} \left|X_s^{\alpha}(\rho^{m_k})-\OX_s^{m_k}\right|^2  
  + 2\xi_t -\frac{4\gamma}{m_k}
  \int_0^t e^{-\frac{2\gamma(t-s)}{m_k}} \xi_s \,ds
 \nonumber\\
 &\leq
C{m_k}  \sup_{s\in[0,t]} \left| X_s^{\alpha}(\rho^{m_k})-\OX_s^{m_k}\right|^2  
  + 4\max_{s\in[0,t]}|\xi_s|,\quad \text{a.s.} \label{est-theta-Z}
\end{align}
Putting \eqref{Jensen-unifm}, \eqref{estimate-L4-bd}, \eqref{est-martingale-Z}, and \eqref{est-theta-Z} together, we obtain that
\begin{align}
&
\EE \left[ \max_{\tau\in[0,t]} \left|Z^k_{\tau}\right|^2    \right]
\leq  C\, \sqrt{m_k}.
\label{ineq-converge-Zmax}
%\\
%& \rightarrow 0,\quad \text{as }k\rightarrow \infty, \nonumber
\end{align}
In addition, it is obvious that
\begin{align}
 \left| \frac{m_k}{\gamma}(1-e^{-\frac{\gamma}{m_k}t})\OV_0\right|
\leq  C m_k \left| \OV_0\right|. \label{est-V0}
\end{align}

As the constants $C$s are independent of $(k,t)$, combining the estimates \eqref{est-drift}, \eqref{converge-m10}, \eqref{ineq-converge-Zmax} and \eqref{est-V0}, letting $k$ tend to infinity on both sides of \eqref{eq-X-mk} and recalling $\frac{1}{2}\geq m_k \rightarrow 0^+$ and the relation \eqref{limit-zero}, we have
	\begin{align*}
\widehat X_t=\OX_0
+\lambda\int_0^t(X_s^{\alpha}(\rho)-\widehat X_s)ds +\sigma\int_0^t D(X_s^{\alpha}(\rho)-\widehat X_s)dB_s.
\end{align*}
Therefore, the limit $\widehat X$ turns out to be a (strong) solution to SDE \eqref{MVCBO1}.
Meanwhile, in view of the continuity of $X^{\alpha}(\rho)$ in Lemma \ref{lemsta}, we can easily verify the uniqueness of the strong solution with standard arguments as in \cite[Theorem 3.1]{carrillo2018analytical}. Thus, we must have $\widehat X = \OX$ that is the unique strong solution to SDE \eqref{MVCBO1} with $ \EE\left[ \max_{t\in[0,T]}|\OX_t|^4\right]\leq C_2$. Further, due to the arbitrariness of the subsequence $\{\OX^{m_k}\}_{k\in\mathbb N}$, we conclude that as $m\rightarrow 0^+$, the sequence of  stochastic processes $\{\OX^m\}_{0< m\leq 1}$  converge weakly to  the unique solution $\overline X$ to SDE \eqref{MVCBO1}.  

Finally, to measure the distance between $\OX^m$ and the limit $\widehat X=\OX$, we may replace $\widehat X$ with $\OX$ in the calculations \eqref{est-drift}, \eqref{converge-m10}, \eqref{ineq-converge-Zmax}, and \eqref{est-V0} and arrive at
\begin{align}
\EE\left[\max_{\tau\in[0,t]}|\OX^{m}_{\tau}-\OX_{\tau}|^2\right]
\leq C\int_0^t\EE[|\OX^{m}_s-\OX_s|^2]ds+ 
C\, \sqrt{m},\quad t\in[0,T], \nonumber
\end{align}
where the constant $C$ is independent of $(m,t)$; if we adopt the estimate \eqref{est-243-Z} instead of \eqref{ineq-converge-Zmax} in the above, it holds that 
\begin{align}
\EE\left[ |\OX^{m}_{t}-\OX_{t}|^2\right]
\leq C\int_0^t\EE[|\OX^{m}_s-\OX_s|^2]ds+ 
C\, m,\quad t\in[0,T], \nonumber
\end{align}
with $C$ independent of $(m,t)$. By Gronwall's inequality, we may have
\begin{align}
\EE\left[\max_{t\in[0,T]}|\OX^{m}_t-\OX_t|^2\right]&\leq C\, \sqrt{m} ,\quad
\max_{t\in[0,T]}\EE\left[ |\OX^{m}_t-\OX_t|^2\right] \leq C\, m ,\label{convergence-l2}
\end{align}
where the constants $C$s depend on $\EE[|\OX_0|^4],\EE[|\OV_0|^4],C_{\alpha,\TE},\lambda,\sigma,d$, and $T$.   
\end{proof}

\begin{rmk}\label{remark_convergence}
When proving the convergence of   $\{\OX^m\}$ satisfying SDEs \eqref{MVeq}  to the solution  $\OX$ of \eqref{MVCBO}, we  cannot expect the convergence of the associated velocity processes $\{\OV^m\}$ due to the indifferentiability of the limit $\{\OX_t\}_{t\geq 0}$ with respect to time $t$ if $\sigma\neq 0$. Therefore, we do not investigate the convergence  of the joint Markovian process $\{(\OX^m,\OV^m)\}$ and consider instead solely the process $\{\OX^m\}$ which satisfies a stochastic equation \eqref{eq-X-mk} of Volterra type, being path-dependent and thus non-Markovian. This non-Markovianity prevents us from using the usual techniques for weak convergence with martingale problems but prompts us to identify the limit by measuring directly the distance between $\OX^{m_k}$ and $\widehat X=\OX$  in the above proof.

Regarding to the rate of convergence in \eqref{convergence-l2}, the $L^2$-estimate of the uniform norm of the paths of $\OX^{m}-\OX$  is on the order of $\sqrt{m}$ while the $L^2$-estimate for each time $t\in[0,T]$ has order of $m$. Such a difference stems from the relation \eqref{Ito-Doeblin} which also reads
$$
|Z^k_t|^2
 = 
 \int_0^t  e^{-\frac{2\gamma(t-s)}{m_k}}  \left|X_s^{\alpha}(\rho^{m_k})-\OX_s^{m_k}\right|^2
  ds 
  + 2\int_0^t e^{-\frac{2\gamma(t-s)}{{m_k}}} (Z^k_s)' D(X_s^{\alpha}(\rho^{m_k})-\OX_s^{m_k})dB_s.
$$
Here the stochastic integral has mean zero and taking expectation on both sides can directly give us the order of $m$ as seen in the first term on the right hand side  of \eqref{est-theta-Z}; however, when considering the uniform norm, we cannot ignore the stochastic integral and its \textit{linear} dependence on $(Z^k_s)_{s\in[0,T]}$ leads to the order $\sqrt{m}$. In addition, in the above proof,  because both $\OX^{m_k}$ and $\OX$ are the strong solutions to associated SDEs with pathwise uniqueness and a strong solution is a functional of the initial value and the driving Wiener process (see \cite[Chapter IV]{ikeda1989stochastic}),   the following convergence
 \begin{equation}\label{strong_convergence}
\mathbb P\left( \lim_{k\rightarrow \infty} \|\OX^{m_k}-\OX\|_0=0 \right)=1,
\end{equation}
 actually holds for strong solutions to SDEs \eqref{MVeq} and \eqref{MVCBO1} on any given probability space $(\Omega,\mc{F},\PP)$ equipped with augmented filtration generated by a $d$-dimensional Wiener process $B$. Indeed, some experiments conducted at the end of Section 4 (see Figures \ref{fig:convergence_no_memory}, \ref{fig:convergence_memory}, and \ref{fig:comparison}) numerically support the convergences established here.
\end{rmk}

\section{Generalization to the case with memory effects}\label{section_memory}
In \cite{grassi2020particle}, the authors considered a PSO model which involves the memory of the local best positions, and it is of the form
{\small 
\begin{align}
&d\OX_t^m= \OV_t^mdt, \label{mXeq}\\
&d \OY_{t}^{m}=\nu\left(\OX_{t}^{m}-\OY_{t}^{m}\right) S^{\beta}\left(\OX_{t}^{m}, \OY_{t}^{m}\right) d t, \label{mYeq}\\
&d\OV_{t}^{m}=-\frac{\gamma}{m} \OV_{t}^{m} d t+\frac{\lambda_{1}}{m}\left(\OY_{t}^{m}-\OX_{t}^{m}\right) d t+\frac{\lambda_{2}}{m}\left({Y}_{t}^{\alpha}(\overline \rho^m)-\OX_{t}^{m}\right) d t \notag\\
&\quad  \qquad  +\frac{\sigma_{1}}{m} D\left(\OY_{t}^{m}-\OX_{t}^{m}\right) d B_{t}^{1}+\frac{\sigma_{2} }{m}D\left({Y}_{t}^{\alpha}(\overline \rho^m)-\OX_{t}^{m}\right) d B_{t}^{2}  \label{mVeq}\,,
\end{align}}
where $B^1$ and $B^2$ are two mutually independent d-dimensional Wiener processes, and similarly to the previous section, we introduce the following regularization of the global best
position
\begin{equation}\label{global_best_regular}
{Y}_t^{\alpha}(\overline \rho^m)=\frac{\int_{\mathbb{R}^{d}} y \omega_{\alpha}(y) {\overline \rho}^m(t,dy) }{\int_{\mathbb{R}^{d}} \omega_{\alpha}(y) {\bar \rho}^m(t,dy) }, \quad {\overline \rho}^m(t,y)=\iint_{\mathbb{R}^{d} \times \mathbb{R}^{d}} f^m(t,dx, y,d v)\,.
\end{equation}
Here the equation \eqref{mYeq} of $\OY^m$ is  the time continuous approximation to the evolution of the local best position, and
$S^{\beta}$ with $\beta \gg1$ is the hyperbolic tangent $S^{\beta}(x,y)=1+\tanh(\beta(\TE(y)-\TE(x))$. The corresponding mean-field PSO equation is
\begin{align}\label{mPSOeq}
\partial_{t} f_t^m+v \cdot \nabla_{x} f_t^m&+\nabla_{y} \cdot(\nu(x-y) S^{\beta}(x, y) f_t^m)
=  \nabla_{v} \cdot\bigg(\frac{\gamma}{m} v f_t^m+\frac{\lambda_{1}}{m}(x-y) f_t^m+\frac{\lambda_{2}}{m}(x-{Y}_t^{\alpha}({\bar \rho}^m)) f_t^m
\notag\\
&
+\Big(\frac{\sigma_{2}^{2}}{2 m^{2}} D(x-{Y}_t^{\alpha}({\bar \rho}^m))^{2}+\frac{\sigma_{1}^{2}}{2 m^{2}} D(x-y)^{2}\Big) \nabla_{v} f_t^m\bigg)\,.
\end{align}

We want to prove that the zero-inertia limit $(m\to 0)$ leads to the following mean-field CBO dynamic
\begin{align*}
\begin{cases}
\OX_t=\OX_0
+\lambda_1\int_0^t (\OY_s-\OX_s)ds +\sigma_1\int_0^tD(\OY_s-\OX_s)dB_s^1+\lambda_2\int_0^t (Y_s^{\alpha}(\bar\rho)-\OX_s)ds +\sigma_2\int_0^t D(Y_s^{\alpha}(\bar\rho)-\OX_s)dB_s^2,
\\
\OY_{t}=\OY_0+\nu\int_0^t\left(\OX_{s}-\OY_{s}\right) S^{\beta}\left(\OX_{s}, \OY_{s}\right) d s\,,
\end{cases}
\end{align*}
and its corresponding  partial differential equation is
\begin{align}\label{mCBOeq}
&\partial_t\rho_t+\nabla_{x} \cdot\left(\lambda_{1}(y-x)+\lambda_{2}({Y}_t^{\alpha}({\bar\rho})-x)\rho_t\right) +\nabla_{y} \cdot\left(\nu(x-y) S^{\beta}(x, y)) \rho_t\right)\notag\\
=&\frac{1}{2} \sum_{j=1}^{d} \frac{\partial^{2}}{\partial x_{j}^{2}}\left(\rho_t(\sigma_{1}^{2}(x-y)_j^{2}+\sigma_{2}^{2}(x-{Y}_{t}^{\alpha}({ \bar \rho}))_j^{2})\right)\,,
\end{align}
where $\bar\rho(t,y)=\int_{\RR^d}\rho(t,dx,y)$.

Since the proof of the zero-inertia limit for the PSO dynamics with memory effects follows similar arguments as developed in the previous section and  no essential innovation is needed to be explained, we only  sketch the proof for the tightness.
\begin{thm}[Tightness]\label{thmtight1}
	Let Assumption \ref{asum} hold and $(\OX_t^m,\OY_t^m,\OV_t^m)_{t\in[0,T]}$ satisfy the system \eqref{mXeq}-\eqref{mVeq}.  For each countable subsequence $\{m_k\}_{k\in\mathbb N} \subset [0,1]$ with $\lim_{k\rightarrow \infty}m_k=0$, the sequence of probability distributions   $\{\rho^{m_k}\}_{k\in\mathbb N}$ of $\{(\OX^{m_k},\OY^{m_k})\}_{k\in\mathbb N}$  is tight.
\end{thm}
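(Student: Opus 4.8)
\emph{The plan.} I would follow the two-step scheme of the proof of Theorem~\ref{thmtight}, now applied to the $\RR^{2d}$-valued process $(\OX^m,\OY^m)$, verifying the two conditions of Aldous' criteria (Lemma~\ref{lemAldous}) for it. The new ingredients --- the extra component $\OY^m$, the second Brownian motion $B^2$, and the weight $S^\beta$ --- cause no real difficulty: since $|S^\beta(x,y)|=|\tanh(\beta(\TE(x)-\TE(y)))|\leq1$ for all $x,y$, equation \eqref{mYeq} is an ODE whose drift is bounded by $\nu(|\OX_t^m|+|\OY_t^m|)$, so both the moments and the path increments of $\OY^m$ are automatically controlled by those of $\OX^m$ and $\OY^m$.

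\emph{Step 1 (checking $(Con1)$).} From \eqref{mYeq} and $|S^\beta|\leq1$ one gets $|\OY_t^m|^4\leq C|\OY_0|^4+C\int_0^t(|\OX_s^m|^4+|\OY_s^m|^4)\,ds$. For $0<m\leq\tfrac12$ I would solve \eqref{mVeq} for $\OV^m$, integrate in time and apply (stochastic) Fubini exactly as in \eqref{onlyX}--\eqref{Fubini}, obtaining a Volterra representation of $\OX_t^m$ with kernel $\tfrac1\gamma(1-e^{-\gamma(t-s)/m})$, drift integrands $\lambda_1(\OY_s^m-\OX_s^m)$ and $\lambda_2(Y_s^\alpha(\bar\rho^m)-\OX_s^m)$, and diffusion integrands $\sigma_1 D(\OY_s^m-\OX_s^m)$ (against $dB_s^1$) and $\sigma_2 D(Y_s^\alpha(\bar\rho^m)-\OX_s^m)$ (against $dB_s^2$). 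Then H\"older's inequality, the moment inequality for stochastic integrals, and the Jensen/Laplace-weight bound $\EE[|Y_t^\alpha(\bar\rho^m)-\OX_t^m|^4]\leq C_{\alpha,\TE}\,\EE[|\OX_t^m|^4+|\OY_t^m|^4]$ (the analogue of \eqref{Jensen}, now using the $y$-marginal $\bar\rho^m$) close a coupled Gronwall inequality for $\EE[|\OX_t^m|^4+|\OY_t^m|^4]$, yielding a bound uniform in $m\in(0,\tfrac12]$. For $\tfrac12\leq m\leq1$ I would instead apply the It\^o--Doeblin formula directly to $|\OX_t^m|^4$, $|\OY_t^m|^4$, $|\OV_t^m|^4$ (with the usual localization) and Gronwall again. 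Combining the two ranges gives $\sup_{m\in(0,1]}\sup_{t\in[0,T]}\EE[|\OX_t^m|^4+|\OY_t^m|^4]\leq C_1<\infty$ with $C_1$ independent of $m$, and Markov's inequality then produces, for each $t$, a compact set of $\RR^{2d}$ carrying all but $\varepsilon$ of the mass of every $(\OX_t^m,\OY_t^m)$, i.e.\ $(Con1)$.

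\emph{Step 2 (checking $(Con2)$).} Without loss of generality take $m_k\leq\tfrac12$, so $\tfrac12\leq\gamma<1$. For a discrete $\sigma((\OX^m,\OY^m)_s;s\in[0,T])$-stopping time $\beta$ with $\beta+\delta_0\leq T$: for $\OY^m$, Cauchy--Schwarz and $|S^\beta|\leq1$ give $\EE[|\OY_{\beta+\delta}^m-\OY_\beta^m|^2]\leq\nu^2\delta\,\EE\!\int_\beta^{\beta+\delta}(|\OX_s^m|+|\OY_s^m|)^2\,ds\leq C\delta^2$ by Step~1. For $\OX^m$ I would rerun the decomposition \eqref{diff} for each of the two drift and two stochastic-integral contributions, using $|e^{-x}-e^{-y}|\leq|x-y|\wedge1$ together with the kernel bounds $\int_0^\tau|e^{-\gamma(\tau-s)/m}-e^{-\gamma(\tau+\delta-s)/m}|^q\,ds\leq\delta$ and $\int_\beta^{\beta+\delta}(1-e^{-\gamma(\beta+\delta-s)/m})^q\,ds\leq\delta$, It\^o's isometry, and the bounds of Step~1; this gives $\EE[|\OX_{\beta+\delta}^m-\OX_\beta^m|^2]\leq C(\delta^{1/2}+\delta+\delta^2)$ uniformly in $m\leq\tfrac12$. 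Hence $\EE[|(\OX_{\beta+\delta}^m,\OY_{\beta+\delta}^m)-(\OX_\beta^m,\OY_\beta^m)|^2]\leq C(\delta^{1/2}+\delta+\delta^2)$, and Markov's inequality verifies $(Con2)$; Lemma~\ref{lemAldous} then gives tightness of the joint laws $\{\rho^{m_k}\}$ of $\{(\OX^{m_k},\OY^{m_k})\}$.

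\emph{Main obstacle.} The only genuinely new point compared with Theorem~\ref{thmtight} is closing the \emph{coupled} fourth-moment Gronwall estimate uniformly in $m$, since $\OY^m$ and $Y_t^\alpha(\bar\rho^m)$ feed into the $\OX^m$-equation while $\OX^m$ feeds into the $\OY^m$-equation. This works precisely because $|S^\beta|\leq1$ keeps the $\OY^m$-drift linearly bounded and the Laplace-weight (Jensen) estimate controls $Y_t^\alpha(\bar\rho^m)$ by the fourth moments of $(\OX^m,\OY^m)$; everything else is a longer but routine repetition of the computations of Section~2. The well-posedness of \eqref{mXeq}--\eqref{mVeq}, needed to make these computations meaningful, is taken for granted, as in \cite{grassi2020particle,huang2021note}.
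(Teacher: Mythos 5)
Your proposal matches the paper's proof almost line for line: the same Volterra representation of $\OX^m$ via Duhamel and stochastic Fubini, the same Jensen/Laplace bound $\EE[|Y_t^\alpha(\bar\rho^m)-\OX_t^m|^4]\leq C\,C_{\alpha,\TE}\,\EE[|\OX_t^m|^4+|\OY_t^m|^4]$ using the marginals $\bar\rho^m,\tilde\rho^m$, the same use of $|S^\beta|\leq1$ to treat the $\OY^m$-equation as a linearly bounded ODE, the same coupled Gronwall closure for $(Con1)$, and the same decomposition with kernel bounds plus Cauchy--Schwarz for $(Con2)$. The only cosmetic differences are that the paper simply omits the $\tfrac12<m\leq1$ range (you spell it out via It\^o--Doeblin) and that your increment estimate $\EE[|\OY_{\beta+\delta}^m-\OY_\beta^m|^2]\leq C\delta^2$ is slightly sharper than the paper's $\leq C\delta$ because you keep the integration window $[\beta,\beta+\delta]$ rather than enlarging it to $[0,T]$ --- either suffices for Aldous' criterion.
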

\begin{proof}
	The proof is similar to Theorem \ref{thmtight}.	
	
	$\bullet$ \textit{Step 1: Checking $(Con 1)$. }   For $0<m\leq \frac{1}{2}$, we first solve  \eqref{mVeq}  for $\OV^m$ and obtain
	\begin{align*}
	\OV_t^m&=e^{-\frac{\gamma}{m}t}\OV_0+\frac{\lambda_1}{m}\int_0^te^{-\frac{\gamma}{m}(t-s)}(\OY_s^m-\OX_s^m)ds+\frac{\sigma_1}{m}\int_0^te^{-\frac{\gamma}{m}(t-s)}D(\OY_s^m-\OX_s^m)dB_s^1\notag\\
	&\quad+\frac{\lambda_2}{m}\int_0^te^{-\frac{\gamma}{m}(t-s)}(Y_s^{\alpha}(\bar\rho^m)-\OX_s^m)ds+\frac{\sigma_2}{m}\int_0^te^{-\frac{\gamma}{m}(t-s)}D(Y_s^{\alpha}(\bar\rho^m)-\OX_s^m)dB_s^2\,.
	\end{align*}
	Here $\bar\rho^m(t,y)=\int_{ \RR^d }\rho^m(t,dx,y)$.
	By Fubini's theorem,  similar arguments as in \eqref{Fubini} yield that
	\begin{align}\label{Fubini1}
	\OX_t^m&=\OX_0+\frac{m}{\gamma}(1-e^{-\frac{\gamma}{m}t})\OV_0
	+\frac{\lambda_1}{\gamma}\int_0^t(1-e^{-\frac{\gamma}{m}(t-s)}) (\OY_s^m-\OX_s^m)ds +\frac{\sigma_1}{\gamma}\int_0^t(1-e^{-\frac{\gamma}{m}(t-s)}) D(\OY_s^m-\OX_s^m)dB_s^1\notag\\
	&\quad +\frac{\lambda_2}{\gamma}\int_0^t(1-e^{-\frac{\gamma}{m}(t-s)}) (Y_s^{\alpha}(\bar\rho^m)-\OX_s^m)ds +\frac{\sigma_2}{\gamma}\int_0^t(1-e^{-\frac{\gamma}{m}(t-s)}) D(Y_s^{\alpha}(\bar\rho^m)-\OX_s^m)dB_s^2
	\,.
	\end{align}
	Following the same computations as in Theorem \ref{thmtight} gives
	\begin{align*}
	\EE[|\OX_t^m|^4]
	\leq C \EE[|\OX_0|^4+|\OV_0|^4]+C\int_0^t\EE[|Y_s^{\alpha}(\rho^m)-\OX_s^m|^4]ds+C\int_0^t\EE[|\OY_s^m-\OX_s^m|^4]ds\,,
	\end{align*}
	where $C$ depends only on $\lambda_1,\sigma_2,\lambda_2,\sigma_2,d$, and $T$. 
	Put $\tilde\rho^m(t,x)=\int_{\RR^d}\rho^m(t,x,dy)$. In a similar way to \eqref{Jensen} we have
	\begin{align*}
	\EE[|Y_t^{\alpha}(\bar\rho^m)-\OX_t^m|^4]&=\int_{\RR^d}
	\left|\frac{\int_{\RR^d}y\omega_{\alpha}^{\mc{E}}(y)\bar\rho^m(t,dy)}{\int_{\RR^d}\omega_{\alpha}^{\mc{E}}(y)\bar\rho^m(t,dy)}-x\right|^4\tilde\rho^m(t,dx)=\int_{\RR^d}
	\left|\frac{\int_{\RR^d}(y-x)\omega_{\alpha}^{\mc{E}}(y)\bar\rho^m(t,dy)}{\int_{\RR^d}\omega_{\alpha}^{\mc{E}}(y)\bar\rho^m(t,dy)}\right|^4\tilde\rho^m(t,dx)\notag\\
	&\leq \frac{\int_{\RR^d}\int_{\RR^d}|x-y|^4\omega_{\alpha}^{\mc{E}}(y)\bar\rho^m(t,dy)\tilde\rho^m(t,dx)}{\int_{\RR^d}\omega_{\alpha}^{\mc{E}}(y)\bar\rho^m(t,dy)}\leq 8C_{\alpha,\TE}\EE[|\OX_t^m|^4+|\OY_t^m|^4]\,.
	\end{align*}
	Thus it yields that
	\begin{align}\label{esX1}
	\EE[|\OX_t^m|^4]
	\leq C \EE[|\OX_0|^4+|\OV_0|^4]+C\int_0^t\EE[|\OY_s^m|^4+|\OX_s^m|^4]ds\,,
	\end{align}
	where $C$ depends only on $\lambda_1,\sigma_2,\lambda_2,\sigma_2,d,T$ and $C_{\alpha,\TE}$.
	
	Recall that
	\begin{align*}
	\OY_{t}^{m}=\OY_0+\nu\int_0^t\left(\OX_{s}^{m}-\OY_{s}^{m}\right) S^{\beta}\left(\OX_{s}^{m}, \OY_{s}^{m}\right) ds
	\end{align*}
	with $S^{\beta}(x,y)=1+\tanh(\beta(\TE(y)-\TE(x))$. Using the fact that $|S^{\beta}|\leq 2$ then it follows
	\begin{equation*}
	\EE[|\OY_t^m|^4]
	\leq C\EE[|\OY_0|^4]+C\int_0^t\EE[|\OY_s^m|^4+|\OX_s^m|^4]ds\,,
	\end{equation*}
	where $C$ depends only on $\nu$ and $T$. This together with \eqref{esX1} implies
	\begin{equation*}
	\EE[|\OX_t^m|^4+|\OY_t^m|^4]
	\leq C \EE[|\OX_0|^4+|\OY_0|^4+|\OV_0|^4]+C\int_0^t\EE[|\OY_s^m|^4+|\OX_s^m|^4]ds\,.
	\end{equation*}
	By Gronwall's inequality, it yields that
	\begin{equation}\label{uniform}
	\sup_{t\in[0,T]}\EE[|\OX_t^m|^4+|\OY_t^m|^4]\leq C(\EE[|\OX_0|^4+|\OY_0|^4+|\OV_0|^4],\lambda_1,\sigma_2,\lambda_2,\sigma_2,d,T,C_{\alpha,\TE},\nu)\,,
	\end{equation}
	which verifies  $(Con 1)$ for the case of $0<m\leq \frac{1}{2}$. We omit the discussions for the case of $\frac{1}{2}< m\leq 1$.
	
	$\bullet$ \textit{Step 2: Checking $(Con 2)$. }  Let $\beta$ be a $\sigma(X^m_s;s\in[0,T])$-stopping time such that $\beta+\delta_0\leq T$. Set $m_0=\frac{1}{2}$ w.l.o.g.. Then for all $0<m\leq m_0$, one has $\frac{1}{2}\leq \gamma<1$. 
Similar to \eqref{diff}, one has
	{\small 	\begin{align}
		&\OX_{\beta+\delta}^m-\OX_{\beta}^m\notag\\
		&=\frac{m}{\gamma}(e^{-\frac{\gamma}{m}\beta}-e^{-\frac{\gamma}{m}(\beta+\delta)})\OV_0 \notag\\
		&\quad +\frac{\lambda_2}{\gamma}\int_0^\beta (e^{-\frac{\gamma}{m}(\beta-s)}-e^{-\frac{\gamma}{m}(\beta+\delta-s)}) (Y_s^{\alpha}(\bar\rho^m)-\OX_s^m)ds
		+\frac{\lambda_2}{\gamma}\int_\beta^{\beta+\delta} (1-e^{-\frac{\gamma}{m}(\beta+\delta-s)}) (Y_s^{\alpha}(\bar\rho^m)-\OX_s^m)ds\notag\\
		& \quad +\frac{\sigma_2}{\gamma}\int_0^\beta (e^{-\frac{\gamma}{m}(\beta-s)}-e^{-\frac{\gamma}{m}(\beta+\delta-s)})D(Y_s^{\alpha}(\bar\rho^m)-\OX_s^m)dB_s^2+\frac{\sigma_2}{\gamma}\int_\beta^{\beta+\delta} (1-e^{-\frac{\gamma}{m}(\beta+\delta-s)})D(Y_s^{\alpha}(\bar\rho^m)-\OX_s^m)dB_s^2\notag \\
			&\quad +\frac{\lambda_1}{\gamma}\int_0^\beta (e^{-\frac{\gamma}{m}(\beta-s)}-e^{-\frac{\gamma}{m}(\beta+\delta-s)}) (\OY_s^m-\OX_s^m)ds
		+\frac{\lambda_1}{\gamma}\int_\beta^{\beta+\delta} (1-e^{-\frac{\gamma}{m}(\beta+\delta-s)}) (\OY_s^m-\OX_s^m)ds\notag\\
		& \quad +\frac{\sigma_1}{\gamma}\int_0^\beta (e^{-\frac{\gamma}{m}(\beta-s)}-e^{-\frac{\gamma}{m}(\beta+\delta-s)})D(\OY_s^m-\OX_s^m)dB_s^1+\frac{\sigma_1}{\gamma}\int_\beta^{\beta+\delta} (1-e^{-\frac{\gamma}{m}(\beta+\delta-s)})D(\OY_s^m-\OX_s^m)dB_s^1\,.
		\end{align}}
	Using the estimate \eqref{uniform} it follows from the same computations as in Theorem \ref{thmtight} that
\begin{align}\label{diffX}
		&\EE[|\OX_{\beta+\delta}^m-\OX_{\beta}^m|^2]\leq C\left(\delta^{\frac{1}{4}} + \delta^2\right)\,,
		\end{align}
		where $C $ depends only on $\EE[|\OX_0|^4+|\OY_0|^4+|\OV_0|^4],\lambda_1,\sigma_2,\lambda_2,\sigma_2,T,C_{\alpha,\TE}$, and $\nu$.
		
		Having a look at 
			\begin{align*}
		\OY_{\beta+\delta}^{m}-\OY_{\beta}^{m}=\nu\int_\beta^{\beta+\delta}\left(\OX_{s}^{m}-\OY_{s}^{m}\right) S^{\beta}\left(\OX_{s}^{m}, \OY_{s}^{m}\right) ds,
		\end{align*}
		 we have
		\begin{equation*}
		|\OY_{\beta+\delta}^{m}-\OY_{\beta}^{m}|^2\leq \nu^2 \delta\int_0^{T}|\OX_{s}^{m}-\OY_{s}^{m}|^2d t\,.
		\end{equation*}
		By estimate \eqref{uniform}, we have
		\begin{equation}
		\EE[|\OY_{\beta+\delta}^{m}-\OY_{\beta}^{m}|^2]\leq  \nu^2 \delta \int_0^{T}\EE[|\OX_{s}^{m}-\OY_{s}^{m}|^4]^{\frac{1}{2}}d t\leq C\delta \,,
		\end{equation}
		where $C $ depends on $\EE[|\OX_0|^4+|\OY_0|^4+|\OV_0|^4],\lambda_1,\sigma_2,\lambda_2,\sigma_2,T,C_{\alpha,\TE}$ and $\nu$. This together with \eqref{diffX} justifies  $(Con 2)$.
	\end{proof}

Let us recall
\begin{align}\label{OXm}
\OX_t^m&=\OX_0+\frac{m}{\gamma}(1-e^{-\frac{\gamma}{m}t})\OV_0
+\frac{\lambda_1}{\gamma}\int_0^t(1-e^{-\frac{\gamma}{m}(t-s)}) (\OY_s^m-\OX_s^m)ds +\frac{\sigma_1}{\gamma}\int_0^t(1-e^{-\frac{\gamma}{m}(t-s)}) D(\OY_s^m-\OX_s^m)dB_s^1\notag\\
&\quad +\frac{\lambda_2}{\gamma}\int_0^t(1-e^{-\frac{\gamma}{m}(t-s)}) (Y_s^{\alpha}(\bar\rho^m)-\OX_s^m)ds +\frac{\sigma_2}{\gamma}\int_0^t(1-e^{-\frac{\gamma}{m}(t-s)}) D(Y_s^{\alpha}(\bar\rho^m)-\OX_s^m)dB_s^2
\end{align}
and
	\begin{align}\label{OYm}
\OY_{t}^{m}=\OY_0+\nu\int_0^t\left(\OX_{s}^{m}-\OY_{s}^{m}\right) S^{\beta}\left(\OX_{s}^{m}, \OY_{s}^{m}\right) d s\,.
\end{align}
Then following the lines of the proof in Theorem \ref{thmzero-limit}, one can easily obtain the following theorem.
\begin{thm}[Zero-inertia limit]\label{thmzero-limit1} 
	Let Assumption \ref{asum} hold and $(\OX_t^m,\OY_t^m)_{t\in[0,T]}$ satisfy the system \eqref{OXm}--\eqref{OYm}. Then as $m\rightarrow 0^+$, the sequence of  stochastic processes $\{(\OX^m,\OY^m)\}_{0< m\leq 1}$  converge weakly to $(\OX,\OY)$  which is the unique solution to the following coupled SDE:
\begin{align*}
\OX_t&=\OX_0
+\lambda_1\int_0^t (\OY_s-\OX_s)ds +\sigma_1\int_0^tD(\OY_s-\OX_s)dB_s^1+\lambda_2\int_0^t (Y_s^{\alpha}(\bar\rho)-\OX_s)ds +\sigma_2\int_0^t D(Y_s^{\alpha}(\bar\rho)-\OX_s)dB_s^2,
\\
\OY_{t}&=\OY_0+\nu\int_0^t\left(\OX_{s}-\OY_{s}\right) S^{\beta}\left(\OX_{s}, \OY_{s}\right) d s\,.
\end{align*}
Moreover,  we have the following convergences
\begin{align}\label{est-convegence-m-xy}
\EE\left[\max_{t\in[0,T]} \left(\big|\OX^m_t-\OX_t\big|^2
+\big|\OY^m_t-\OY_t\big|^2\right)
\right]\leq C \, \sqrt{m},\quad
\max_{t\in[0,T]} \EE\left[\big|\OX^m_t-\OX_t\big|^2
+\big|\OY^m_t-\OY_t\big|^2
\right]\leq C \, {m}.
\end{align}
where the constants $C$s depend  on $\EE[|\OX_0|^4+|\OY_0|^4+|\OV_0|^4],\lambda_1,\sigma_2,\lambda_2,\sigma_2,\beta,T,C_{\alpha,\TE},d$, and $\nu$.
\end{thm}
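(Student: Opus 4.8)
The plan is to reproduce, essentially line by line, the two-step argument behind Theorem \ref{thmzero-limit}, now carrying the coupled pair $(\OX^m,\OY^m)$ instead of $\OX^m$ alone and treating the memory drift $g(x,y):=(x-y)S^\beta(x,y)$ as the one genuinely new ingredient. As a preliminary I would record the well-posedness of the limiting coupled McKean system appearing in the theorem: existence and uniqueness follow by the same fixed-point-in-law scheme as behind Theorem \ref{thm-huang2021note}, using Lemma \ref{lemsta} for the nonlinear term $Y^\alpha(\bar\rho)$ and the bound $|S^\beta|\le 1$ together with the local Lipschitz continuity of $g$ and fourth-moment bounds for the $\OY$-equation. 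Then, for any sequence $m_k\to 0^+$ (w.l.o.g.\ $m_k\le\tfrac12$, so $\gamma=1-m_k\in[\tfrac12,1)$), Theorem \ref{thmtight1} gives tightness of the laws of $\{(\OX^{m_k},\OY^{m_k})\}$ on $\mc{C}([0,T];\RR^{2d})$; Prohorov's theorem extracts a weakly convergent subsequence and Skorokhod's representation theorem realizes it, on a common probability space, as an almost surely convergent sequence $(\OX^{m_k},\OY^{m_k})\to(\widehat X,\widehat Y)$ in $\mc{C}([0,T];\RR^{2d})$. Throughout, the uniform-in-$m$ fourth-moment bound \eqref{uniform} (and its analogue for the limit) supplies uniform integrability of the integrands below and the uniform control $\sup_k\sup_{t\le T}|Y^\alpha_t(\bar\rho^{m_k})|<\infty$ as in \eqref{Xalbound}.

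Next I would identify the limit by passing to $m_k\to 0$ in the integral equations \eqref{OYm} and \eqref{OXm}. For \eqref{OYm} this is immediate: $S^\beta$ is bounded and continuous, so the uniform almost-sure convergence of $\OX^{m_k}$ and $\OY^{m_k}$ and dominated convergence give $\widehat Y_t=\OY_0+\nu\int_0^t(\widehat X_s-\widehat Y_s)S^\beta(\widehat X_s,\widehat Y_s)\,ds$. For \eqref{OXm} I would argue exactly as in the proof of Theorem \ref{thmzero-limit}: the term $\tfrac{m_k}{\gamma}(1-e^{-\gamma t/m_k})\OV_0$ tends to $0$; the kernels $\tfrac1\gamma(1-e^{-\gamma(t-s)/m_k})$ stay bounded by $2$ and converge to $1$ for a.e.\ $s<t$; Lemma \ref{lemsta} together with the convergence of the time-marginals gives $Y^\alpha_s(\bar\rho^{m_k})\to Y^\alpha_s(\bar\rho)$; and the two stochastic integrals are handled by It\^o's isometry. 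Hence $(\widehat X,\widehat Y)$ solves the limiting coupled system, which has a unique solution, so $(\widehat X,\widehat Y)=(\OX,\OY)$; since the subsequence was arbitrary, $(\OX^m,\OY^m)\rightharpoonup(\OX,\OY)$.

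For the rate \eqref{est-convegence-m-xy} I would set $W(t):=\EE[|\OX^m_t-\OX_t|^2+|\OY^m_t-\OY_t|^2]$ for $0<m\le\tfrac12$ and aim at a single Gronwall inequality $W(t)\le C\int_0^tW(s)\,ds+Cm$. Subtracting the limiting $\OX$-equation from \eqref{OXm} and estimating term by term exactly as in \eqref{est-drift}--\eqref{ineq-converge} gives the $\OX$-part: the mismatch between $\tfrac1\gamma(1-e^{-\gamma(t-s)/m})$ and $1$ contributes $O(m)$ via $|e^{-x}-e^{-y}|\le(|x-y|)\wedge1$, Lemma \ref{lemsta} controls $|Y^\alpha_s(\bar\rho^m)-Y^\alpha_s(\bar\rho)|^2\le C\,\EE[|\OX^m_s-\OX_s|^2]$, and $\OY^m_s$ enters linearly, so $\EE[|\OX^m_t-\OX_t|^2]\le C\int_0^tW(s)\,ds+Cm$. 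For the $\OY$-part, \eqref{OYm} and Cauchy--Schwarz give
\[
|\OY^m_t-\OY_t|^2\le \nu^2T\int_0^t\big|g(\OX^m_s,\OY^m_s)-g(\OX_s,\OY_s)\big|^2\,ds ,
\]
and I would split $g(\OX^m_s,\OY^m_s)-g(\OX_s,\OY_s)=[(\OX^m_s-\OY^m_s)-(\OX_s-\OY_s)]S^\beta(\OX^m_s,\OY^m_s)+(\OX_s-\OY_s)[S^\beta(\OX^m_s,\OY^m_s)-S^\beta(\OX_s,\OY_s)]$. The first summand is bounded in norm by $|\OX^m_s-\OX_s|+|\OY^m_s-\OY_s|$ because $|S^\beta|\le1$; for the second I would use that $\tanh$ is $1$-Lipschitz and Assumption \ref{asum}(1) to get $|S^\beta(\OX^m_s,\OY^m_s)-S^\beta(\OX_s,\OY_s)|\le\beta L[(|\OX^m_s|+|\OX_s|)|\OX^m_s-\OX_s|+(|\OY^m_s|+|\OY_s|)|\OY^m_s-\OY_s|]$ and then absorb the moment factors via Hölder's inequality and the uniform fourth-moment bound \eqref{uniform}. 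Summing the two parts and applying Gronwall yields $\sup_{t\le T}W(t)\le Cm$, which is \eqref{est-convegence-m-xy}; since this holds along the arbitrary subsequence, it also completes the weak-convergence claim. The step I expect to be the main obstacle is precisely the treatment of the memory drift $g=(x-y)S^\beta(x,y)$: because $\TE$ is only locally Lipschitz, $S^\beta$ is only locally Lipschitz (with a linearly growing constant) and it couples the two components, so the $\OY$-estimate cannot be decoupled from the $\OX$-estimate and must be closed delicately through the uniform moment bounds — this is the main point where the argument genuinely departs from Theorem \ref{thmzero-limit}.
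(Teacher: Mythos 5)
Your two-step plan (tightness plus identification, then a direct Gronwall for the rate) follows exactly the structure the paper invokes, since the paper merely states that the proof proceeds along the lines of Theorem \ref{thmzero-limit} and omits the details. The tightness/Skorokhod step and the qualitative identification of the limit $(\widehat X,\widehat Y)=(\OX,\OY)$ are sound: you correctly note that the boundedness and continuity of $S^\beta$, together with the uniform fourth-moment bound \eqref{uniform} for uniform integrability, suffice to pass to the limit in \eqref{OYm}, and the $\OX$-equation is handled exactly as in Theorem \ref{thmzero-limit}.

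The gap is in the quantitative Gronwall estimate \eqref{est-convegence-m-xy}, precisely in the step you already flag as delicate. You bound
$\big|S^\beta(\OX^m_s,\OY^m_s)-S^\beta(\OX_s,\OY_s)\big|\le \beta L\big[(|\OX^m_s|+|\OX_s|)|\OX^m_s-\OX_s|+(|\OY^m_s|+|\OY_s|)|\OY^m_s-\OY_s|\big]$ and then propose to ``absorb the moment factors via H\"older's inequality and the uniform fourth-moment bound.'' This does not close the loop. After squaring and multiplying by $|\OX_s-\OY_s|^2$ the offending term has the schematic form $\EE\big[\,|\OX_s-\OY_s|^2(|\OX^m_s|+|\OX_s|)^2\,|\OX^m_s-\OX_s|^2\,\big]$, which is degree six in the processes. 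Any H\"older split with finite exponents produces a factor $\big(\EE[|\OX^m_s-\OX_s|^{2r}]\big)^{1/r}$ with $r>1$, not the needed $\EE[|\OX^m_s-\OX_s|^2]$; the only way to retain the latter is an $L^\infty$ bound on $|\OX_s-\OY_s|(|\OX^m_s|+|\OX_s|)$, which is unavailable. Note the contrast with the term $Y^\alpha(\bar\rho)$, where Lemma \ref{lemsta} is a statement about the \emph{laws} and its Lipschitz constant is controlled by the uniform fourth moments; $S^\beta$ is instead a pointwise function of the random variables, so its local Lipschitz constant remains a random, unbounded factor inside the expectation. A truncation-plus-Markov or interpolation using $|S^\beta|\le 1$ gives only a weaker (sub-$m$) rate after Gronwall. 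So the weak-convergence half of the theorem is fine, but the claimed $O(m)$ rate for the $\OY$-component is not justified by your argument as written; one would need either a pathwise (rather than $L^2$) closeness of $\OX^m$ to $\OX$, or a different moment functional in which the Gronwall closes, or an additional structural assumption on $\TE$ (e.g.\ global Lipschitzness) that tames $S^\beta$.
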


\section{Numerical examples on the zero-inertia limit}\label{numerics}
We conclude this paper with a few instructive numerical experiments on validating the zero-inertia limit. We will focus on the mono-dimensional case since it allows us to see more clearly how the distribution of particles evolves in time depending on the inertia parameter $m$, and hence show the zero-inertia limit. Different benchmark functions have been used and tested, but we will report here the case of the Ackley function shown in Figure \ref{fig:ackley1D}. Following the same structure of the paper, we will first analyze the case without memory effect and then we will generalize as in Section \ref{section_memory} to the case with memory. Extensive discussions on other numerical implementations and experiments are presented in \cite{grassi2020particle}.

\begin{figure}[h]
\includegraphics[scale=0.3]{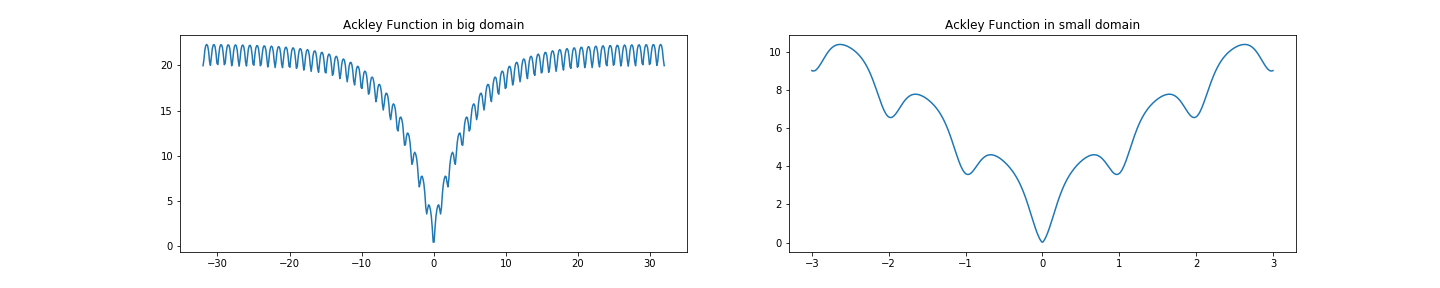}\,
\caption{Ackley function in a big (left) and small (right) domain with its many local minima.}
\label{fig:ackley1D}
\end{figure}

\subsection{Small inertia limit without memory}
Given the system of stochastic differential equations in \eqref{PSO}, the particle system can be solved by using a semi-implicit discretization scheme
\begin{align}\label{PSO_discrete}
\begin{cases}
X_{n+1}^{i,m}= X_n^{i,m} +\Delta t V_{n+1}^{i,m}, \\
V_{n+1}^{i,m}=\frac{m}{m+\gamma \Delta t}V_n^{i,m}+\frac{\lambda \Delta t}{m + \gamma \Delta t}(X_n^{\alpha,m}-X_n^{i,m})+\frac{\sigma \sqrt{\Delta t}}{m + \gamma \Delta t}D(X_n^{\alpha,m}-X_n^{i,m})\theta_n^i,\quad i=1,\cdots,N\,,
\end{cases}
\end{align}
where $X_n^{i,m}$ and $V_{n}^{i,m}$ are, respectively, the position and velocity of the i-th particle at the discrete time $n\Delta t$ with $\Delta t$ being the time discretization, and the diagonal matrix $D(X_n^{\alpha,m}-X_n^{i,m})$ simply coincides with $X_n^{\alpha,m}-X_n^{i,m}$ as we are considering the mono-dimensional case. Moreover, $X_n^{\alpha,m}$ is defined as in \eqref{regularizer} and $\theta^i_n \sim \mathcal N(0,1)$ $\forall i,n$.
We compare this particle system with the CBO dynamic of the form \eqref{MVCBO}, which can be solved using the Euler-Maruyama scheme
\begin{equation}\label{CBO_discrete}
X^i_{n+1} = X_{n}^i + \Delta t \lambda (X_n^{\alpha} - X_n^i) + \sqrt{\Delta t } \sigma (X_n^{\alpha}-X_n^i) \theta_n^i\, .
\end{equation}
As already mentioned, we consider the minimization of the Ackley function with minimum at $x=0$ and, starting from the same initial distribution of particles, we solve the PSO system \eqref{PSO_discrete} for different inertia values. Then, we compare the evolution of the distribution of particles with the one of the particles moving according to the CBO system \eqref{CBO_discrete}. In order to be able to compare the results, we fix the parameters $\lambda = 1$, $\sigma = \frac{1}{\sqrt{3}}$ and $\alpha = 30$, while $\theta_n^i $ are sampled from $\mathcal N(0,1)$ and fixed for each $i= 1,..., N$ and $n \in [0,T/\Delta t]$. Moreover, $T$ is set to 1 and the time discretization is $\Delta t = 0.01$, with a total number of particles $N = 10^4$.\\
\begin{figure}
\includegraphics[scale=0.3]{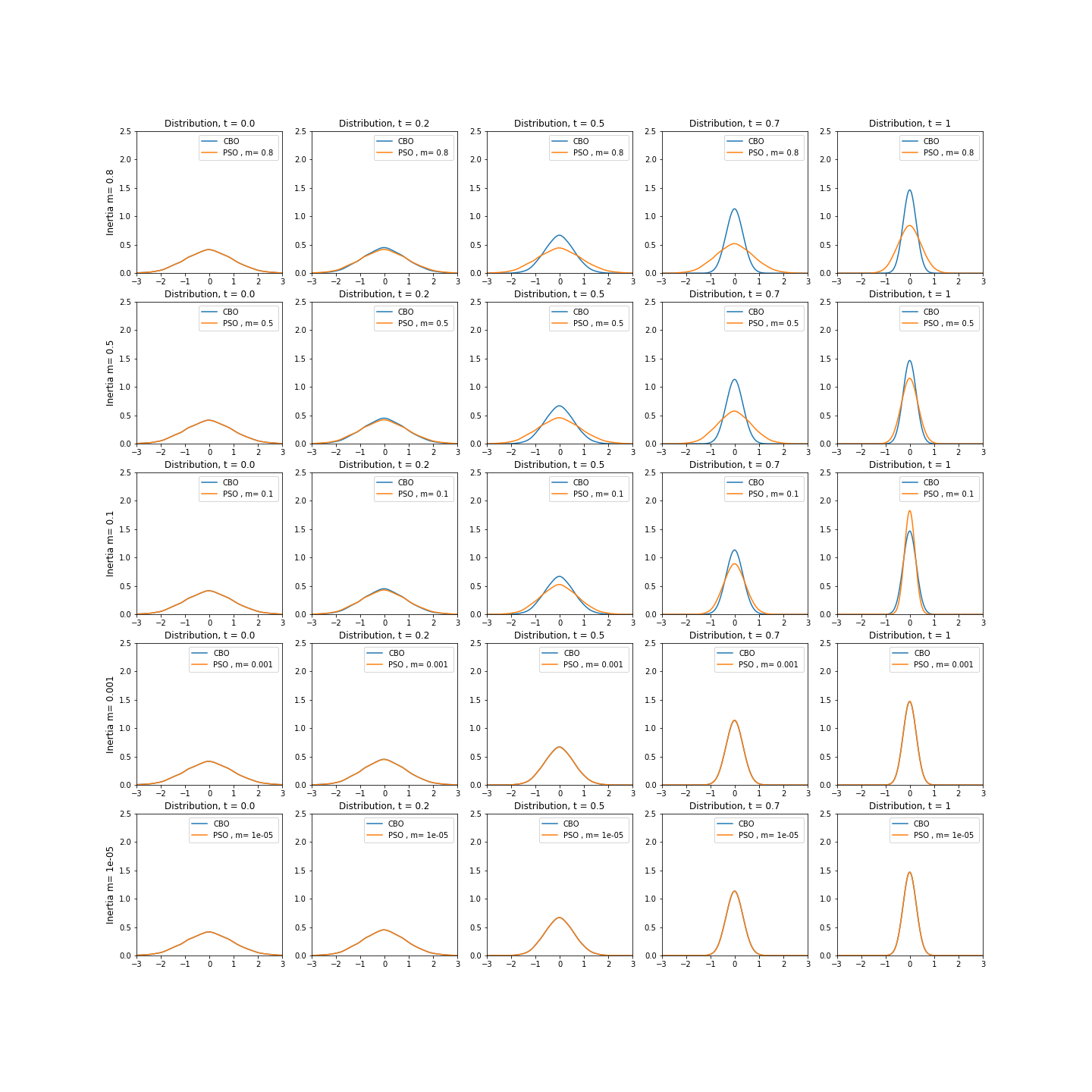}\,
\caption{Comparison of the CBO \eqref{CBO_discrete} and PSO \eqref{PSO_discrete} dynamics for different inertia values (which are changing over the rows) and at many time steps (changing over columns), starting from a normal distribution.}
\label{fig:small_inertia_limit_gaussian}
\end{figure}
Figure \ref{fig:small_inertia_limit_gaussian} shows in each row the evolution of the CBO distribution and the one of the PSO system with $m$ fixed that is decreasing over rows. The initial particles are always sampled from the same distribution, which is in this case a Gaussian centered in $0$ with variance $1$. Clearly, the PSO system with $m= 0.8$ leads to the correct minimum in 0 at the final time step $t=1$, but the distribution of the particles is different from the one of the CBO. While, for any $t \in [0,T]$, if the inertia value is decreased to $0.1$, or even to $0.001$, the two distributions, namely the one obtained via CBO and the PSO one, are indistinguishable, as the last two rows of Figure \ref{fig:small_inertia_limit_gaussian} show.\\ These considerations are confirmed in Figure \ref{fig:comparison_distributions} where we compare the distributions obtained in Figure \ref{fig:small_inertia_limit_gaussian} using the Wasserstein 2 distance between the CBO distribution and the PSO distribution. On the left of Figure \ref{fig:comparison_distributions}, the Wasserstein distance is plotted for each time step. Moreover, since we want to show the influence of the inertia parameter, we take the mean value of the Wasserstein distance over all time steps and plot it as a function of the inertia values. This is shown on the right of Figure \ref{fig:comparison_distributions} where we also add the mean value of the Kullback-Leibler divergence since the latter is a well-known measure used to compare distributions, especially in statistics.
\begin{figure}
\includegraphics[scale=0.3]{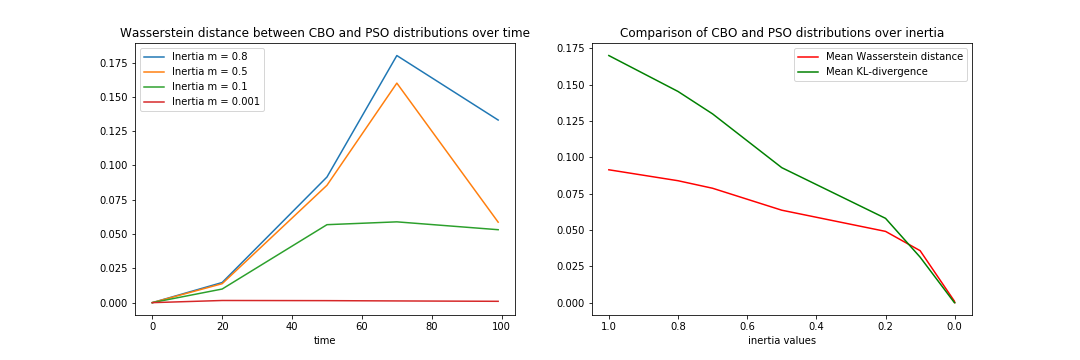}\,
\caption{Left: Wasserstein distance over time ; Right: Wasserstein distance and Kullback-Leibler divergence (in mean) over the inertia values.}
\label{fig:comparison_distributions}
\end{figure}
Moreover, since it is necessary to start with an initial distribution that is close to the global minimizer, we also try to see what happens when the initial distribution is a uniform distribution between $-3$ and $3$ and compare the evolution of its particles according to the CBO and PSO dynamics, with varying inertia parameters. The result is shown in Figure \ref{fig:small_inertia_limit_uniform}. In this case, the difference between the CBO distribution and the one of the PSO dynamics is way higher in the case of big inertia value, but, as before, goes to zero as soon as $m$ converges to $0$.
\begin{figure}
\includegraphics[scale=0.3]{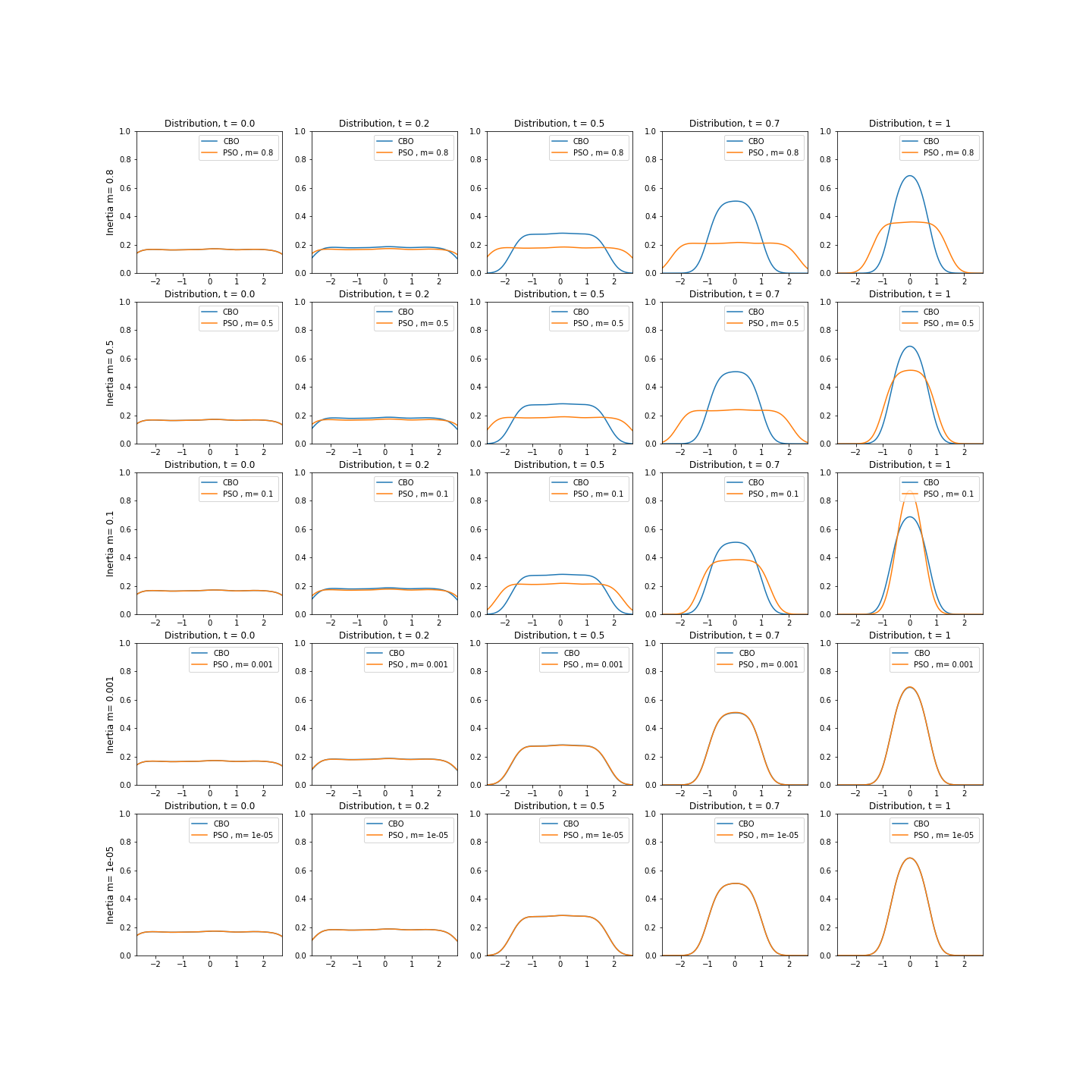}\,
\caption{Evolution of an initial uniform distribution according to CBO \eqref{CBO_discrete} and PSO \eqref{PSO_discrete} dynamics and their comparison for different time steps (on the columns) and different inertia values (on the rows).}
\label{fig:small_inertia_limit_uniform}
\end{figure}

\subsection{Small inertia limit with memory effect}
The PSO model which involves the memory of the local and global best positions, underlying \eqref{mXeq}--\eqref{mVeq}, can similarly be solved via
{\small \begin{align}\label{PSO_memory_discrete}
\begin{cases}
X_{n+1}^{i,m} = X_n^{i,m} +\Delta t V_{n+1}^{i,m}, \quad i=1,\cdots,N\\
Y_{n+1}^{i,m} =Y_{n}^{i,m}+\nu \Delta t (X_{n+1}^{i,m}-Y_n^{i,m}) S^{\beta}(X_{n+1}^{i,m}, Y_n^{i,m}), \\
V_{n+1}^{i,m}= \frac{m}{m+\gamma \Delta t}V_n^{i,m}+\frac{\lambda_1 \Delta t}{m + \gamma \Delta t}(Y_n^{i,m}-X_n^{i,m})+\frac{\lambda_2 \Delta t}{m + \gamma \Delta t}(Y_n^{\alpha,m}-X_n^{i,m}) \\
\qquad \quad +\frac{\sigma_1 \sqrt{\Delta t}}{m + \gamma \Delta t}D(Y_n^{i,m}-X_n^{i,m})\theta_n^{1,i} +\frac{\sigma_2 \sqrt{\Delta t}}{m + \gamma \Delta t}D(Y_n^{\alpha,m}-X_n^{i,m})\theta_n^{2,i}\,,
\end{cases}
\end{align}}
where $Y_n^{i,m}$ is the local best that the i-th particle has memory of, and $ Y_n^{\alpha,m}$ is the regularized global best, defined as in \eqref{global_best_regular}. Clearly, the corresponding CBO dynamics is the following
\begin{align}\label{CBO_memory_discrete}
\begin{cases}
X_n^{i} = X_n^i + \lambda_1 \Delta t (Y_n^i-X_n^i) + \lambda_2 \Delta t (Y_n^{\alpha}-X_n^i) + \sigma_1 \sqrt{\Delta t} D(Y_n^i-X_n^i) \theta_n^{1,i} + \sigma_2 \sqrt{\Delta t} (Y_n^{\alpha} -X_n^i) \theta_n^{2,i} \\
Y_n^i = Y_n^i + \nu \Delta t (X_n^i-Y_n^i) S^{\beta}(X_n^i, Y_n^i)
\end{cases}
\end{align}
Once again, since we want to show the convergence of the PSO distribution obtained from \eqref{PSO_memory_discrete} with a small inertia value to the one attained via the CBO system \eqref{CBO_memory_discrete}, we need to set some of the parameters to the same values in order to be able to compare the results. Their values are the following
\begin{equation}
\lambda_1 = \lambda_2 = 1 \quad \sigma_1 = \sigma_2 = \frac{1}{\sqrt{3}} \quad \alpha = 30 \quad \beta=30 \quad \nu = \frac{1}{2}
\end{equation}
and, as before, the effect of the Brownian motion leads to $\theta_n^{1,i}, \theta_n^{2,i}$ which are sampled from a normal distribution and set to a fixed value $\forall i= 1, ..,N$ and $\forall n \in [0,T / \Delta t]$. The time discretization and the number of particles are set to the same values as in the case without memory, namely $T = 1$, $\Delta t = 0.01$, and $N=10^4$. The difference with the previous case is that now the distribution of which we want to show convergence, is actually a function of both the particles' position and their local best and, as such, it is bi-dimensional. The result of the evolution of the particles according to the CBO dynamics \eqref{CBO_memory_discrete} is shown in the first row of Figure \ref{fig:small_inertia_limit_with_memory}, while the second row represents the evolution according to the PSO dynamics \eqref{PSO_memory_discrete} with a big inertia parameter, e.g. $m= 0.8$. This is compared with the evolution presented in the third row in which the inertia is set to a very low value, e.g. $m= 0.001$.\\
\begin{figure}
\includegraphics[scale=0.3]{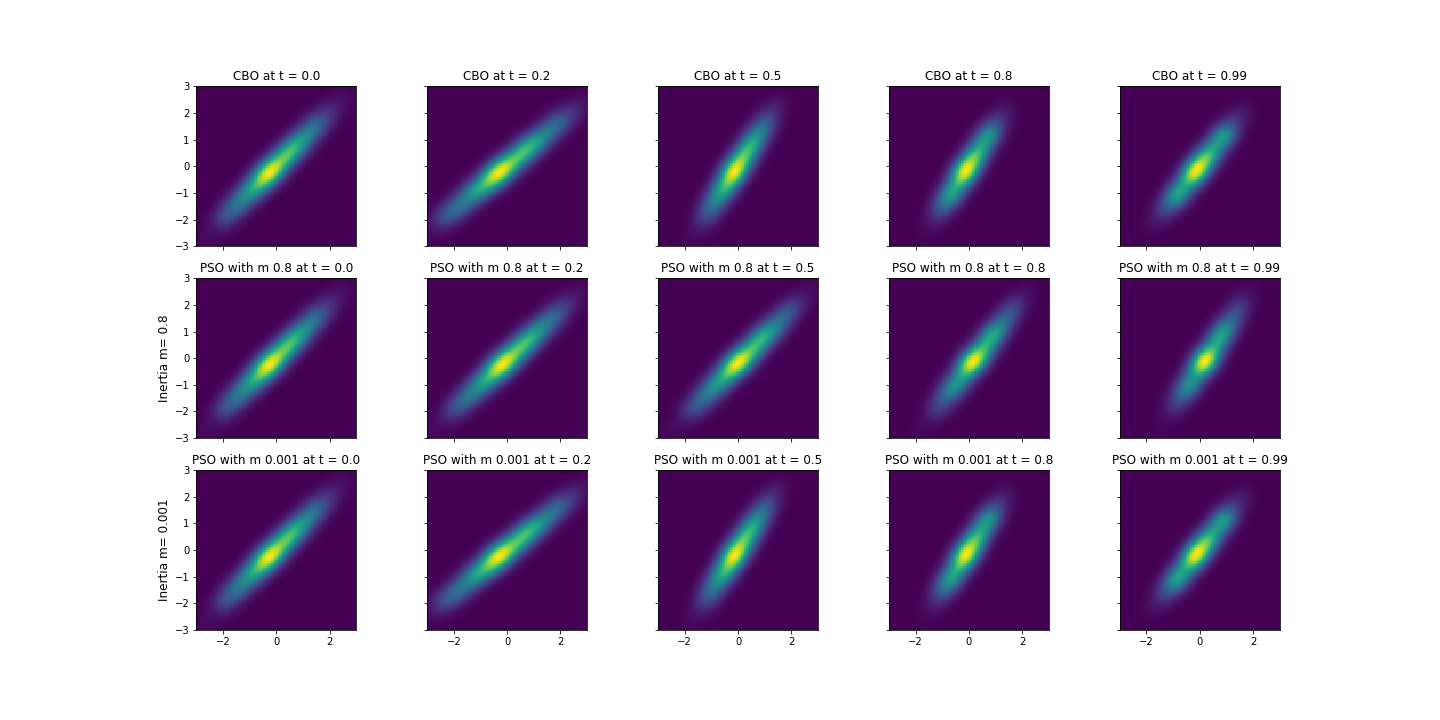}\,
\caption{First Row: evolution in time of the initial gaussian distribution according to CBO dynamics \eqref{CBO_memory_discrete}; Second Row: evolution in time of the initial Gaussian distribution according to PSO dynamics \eqref{PSO_memory_discrete} with $m = 0.8$; Third Row: evolution produced by the PSO dynamics \eqref{PSO_memory_discrete} with $m = 0.001$.}
\label{fig:small_inertia_limit_with_memory}
\end{figure}
In the case without memory, it is easy to see the convergence at each time step of the PSO system with small inertia to the CBO, but it's not as clear now that the distribution we are interested in is bi-dimensional. To be able to compare the evolution and to check how similar the distributions are at every time step, we show in Figure \ref{fig:comparison_distribution_2D} different plots: we look at the particles' mean positions (left) and their local best (center) for each time step and, in both cases, show their standard deviation as a colored area around the mean. It is interesting to see how the particles are moving and where they are attracted to, especially because the pattern of the PSO dynamics with small inertia is the same as the one of CBO. Finally, on the right plot of Figure \ref{fig:comparison_distribution_2D}, we show how both our similarity measures, namely the Wasserstein distance and the Kullback-Leibler divergence, are decreasing along  with the inertia parameter, validating the small inertia limit also in the general case with memory.
\begin{figure}
\includegraphics[scale=0.3]{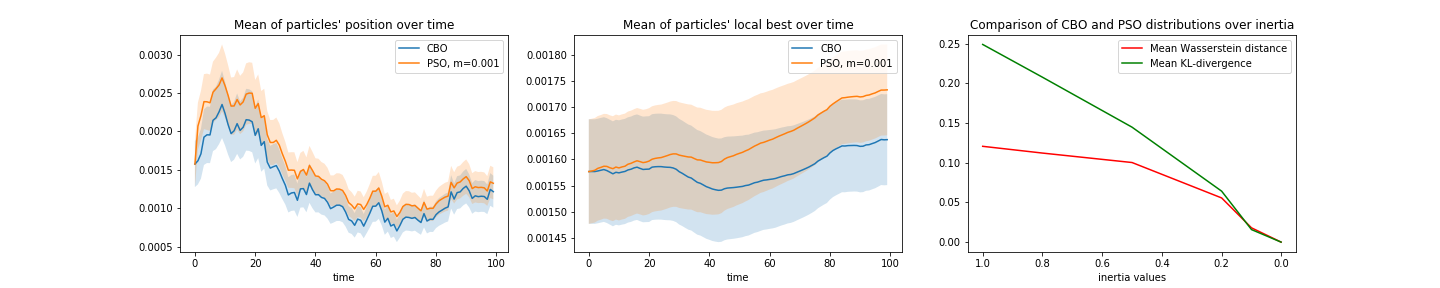}\,
\caption{Left: mean particles' position with their standard deviation; Center: mean particles' local best with the standard deviation too; Right: plot of the Wasserstein distance and Kullback–Leibler divergence between the distribution obtained via CBO dynamics \eqref{CBO_memory_discrete} and the one obtained through PSO system \eqref{PSO_memory_discrete} with small inertia value, i.e. $0.001$.}
\label{fig:comparison_distribution_2D}
\end{figure}

Finally, we want to numerically investigate the convergence in path space given in Theorem \ref{thmzero-limit}  and Remark \ref{remark_convergence}. Figure \ref{fig:convergence_no_memory} shows on the left the trajectory of a random particle which is moving according to the CBO system \eqref{CBO_discrete} and the PSO system \eqref{PSO_discrete} with different inertia values. Using all the $N = 10^4$ particles, we take 
\begin{equation}\label{convergence_with_L2_norm}
\lim_{k\rightarrow \infty} \frac{1}{N} \sum_{i=1}^{N} \max_{n \in [0,T/\Delta t]}\abs{X^{i,m}_n-X^{i}_n}^2 ,
\end{equation}
namely the squared $L^2$-norm of the vector containing the sup-norm in time of the distances between the CBO and PSO solutions. On the right of Figure \ref{fig:convergence_no_memory}, the numerical verification of formula \eqref{strong_convergence} is presented: for every inertia value $m$, the quantity $\max_{n \in [0,T/\Delta t]} \abs{X^{i_0,m}_t-X^{i_0}_n}^2$ is plotted, where $i_0$ indicates the same random particles whose trajectories are plotted on the left of Figure  \ref{fig:convergence_no_memory}. Moreover, the quantity in \eqref{convergence_with_L2_norm} is also plotted for every inertia value $m$ and the convergence to the limit $0$ is clear.

\begin{figure}
\includegraphics[scale=0.3]{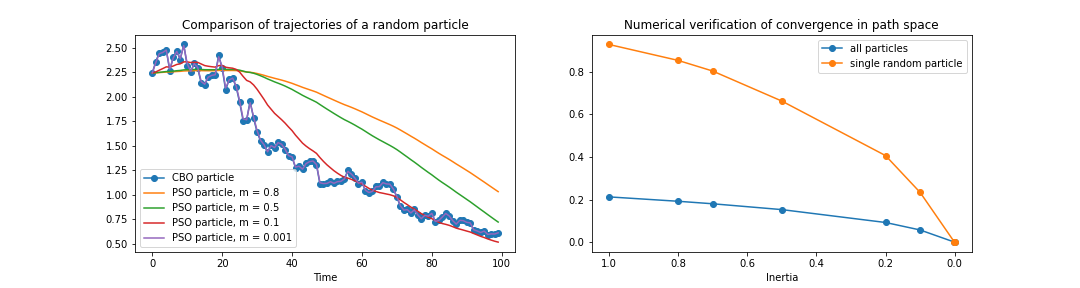}\,
\caption{Left: given a random particle, its trajectory according to CBO \eqref{CBO_discrete} is compared with the ones obtained via PSO \eqref{CBO_discrete} with different inertia values ; Right: verification of limit \eqref{convergence_with_L2_norm}, i.e. plot of $\frac{1}{N} \sum_{i=1}^{N} \max_{n \in [0,T/\Delta t]} \abs{X^{i,m}_n-X^{i}_n}^2$ for every decreasing value of $m$ (in blue) and of the quantity $\max_{n \in [0,T/\Delta t]} \abs{X^{i_0,m}_n-X^{i_0}_n}^2$ for the same random particles considered on the left (orange color).}
\label{fig:convergence_no_memory}
\end{figure}

Both the plots in Figure \ref{fig:convergence_no_memory} represent the particles moving according to the discrete systems \eqref{PSO_discrete},  \eqref{CBO_discrete} in the case without memory effect. In the more general case with memory, the systems \eqref{PSO_discrete},  \eqref{CBO_discrete} become the ones in \eqref{PSO_memory_discrete} and  \eqref{CBO_memory_discrete}, but the convergence in path space still holds and it's numerically shown in Figure \ref{fig:convergence_memory}. Here, not only the trajectories of a random particle are plotted (on the left), but also the evolution of the local bests according to both the discretized systems and with different inertia values are shown in the central image of Figure \ref{fig:convergence_memory}. Once again, even in the case with memory effect, the strong convergence in path space is shown for the random particle previously considered and also for all of the particles in terms of the quantity $\frac{1}{N} \sum_{i=1}^{N} \max_{n \in [0,T/\Delta t]} \left(\abs{X^{i,m}_n-X^{i}_n}^2 + \abs{Y^{i,m}_n-Y^{i}_n}^2 \right)$, on the right of Figure \ref{fig:convergence_memory}.

\begin{figure}
\includegraphics[scale=0.3]{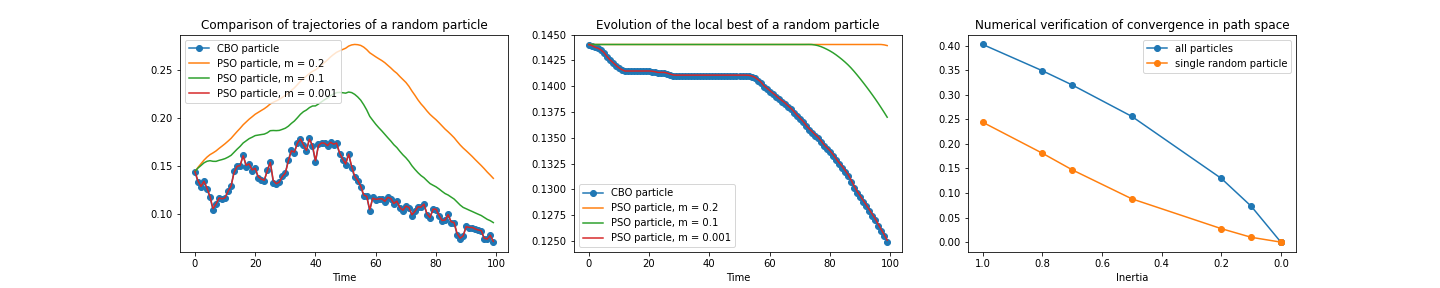}\,
\caption{Left: random particle's trajectories according to CBO \eqref{CBO_memory_discrete} and PSO \eqref{PSO_memory_discrete} with decreasing inertia values; Center: random particle's memory of the local best according to CBO \eqref{CBO_memory_discrete} and PSO \eqref{PSO_memory_discrete} with decreasing inertia values; Right: verification of limit of $\frac{1}{N} \sum_{i=1}^{N} \max_{n \in [0,T/\Delta t]} \left(\abs{X^{i,m}_n-X^{i}_n}^2 + \abs{Y^{i,m}_n-Y^{i}_n}^2 \right)$ for every decreasing value of $m$ (in blue) and of the quantity $\max_{n \in [0,T/\Delta t]} \left( \abs{X^{i_0,m}_n-X^{i_0}_n}^2+  \abs{Y^{i_0,m}_n-Y^{i_0}_n}^2 \right)$ for the same random particles considered on the left (orange color).}
\label{fig:convergence_memory}
\end{figure}

 Figure \ref{fig:comparison} shows a comparison between $L^2$-estimate of the form \eqref{convergence_with_L2_norm} and the one of the following form
\begin{equation}\label{convergence_sup_outside}
\lim_{k\rightarrow \infty} \max_{n \in [0,T/\Delta t]} \frac{1}{N} \sum_{i=1}^{N} \abs{X^{i,m}_n-X^{i}_n}^2 ,
\end{equation}
in order to check their convergence rate. On the left, both the quantities in formula \eqref{convergence_with_L2_norm} and \eqref{convergence_sup_outside} are plotted for every inertia value in the case without memory effect. While on the right, the corresponding formulas (indicated in the caption of Figure \ref{fig:comparison}) are plotted for the case with memory effect. In both cases, our numerical experiments indicate that just like the rate of convergence given in \eqref{convergence-l2} (resp. \eqref{est-convegence-m-xy}), the $L^2$-estimate of the uniform norm of the paths of $\OX^{m}-\OX$ (resp. $(\OX^{m}-\OX,\,\, \OY^m_t-\OY)$)  is on the order of $\sqrt{m}$ while the $L^2$-estimate for each time $t\in[0,T]$ has order of $m$.

\begin{figure}
\includegraphics[scale=0.3]{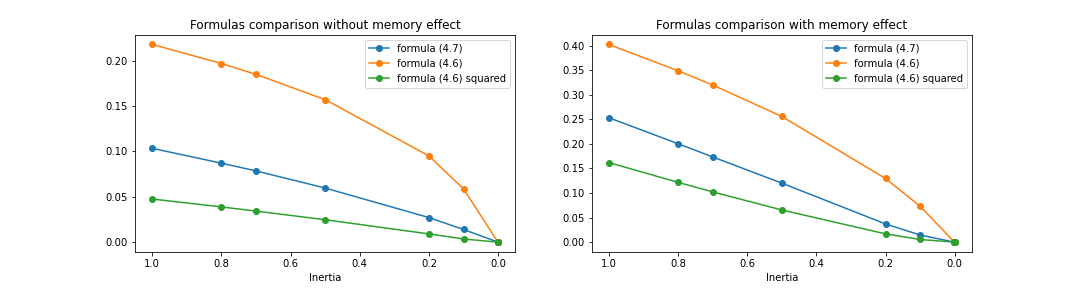}\,
\caption{Left: case without memory, comparison of $\frac{1}{N} \sum_{i=1}^{N} \max_{n \in [0,T/\Delta t]} \abs{X^{i,m}_n-X^{i}_n}^2$ (orange) and  $\max_{n \in [0,T/\Delta t]} \frac{1}{N} \sum_{i=1}^{N}  \abs{X^{i,m}_n-X^{i}_n}^2$ (blue) for every inertia value, where the curve $\left(\frac{1}{N} \sum_{i=1}^{N} \max_{n \in [0,T/\Delta t]} \abs{X^{i,m}_n-X^{i}_n}^2 \right)^2$ (green) is also plotted as a function of $m$;  Right: case with memory, comparison of quantity $\frac{1}{N} \sum_{i=1}^{N} \max_{n \in [0,T/\Delta t]} \left(\abs{X^{i,m}_n-X^{i}_n}^2 + \abs{Y^{i,m}_n-Y^{i}_n}^2 \right)$ (orange) with $\max_{n \in [0,T/\Delta t]} \frac{1}{N} \sum_{i=1}^{N}  \left(\abs{X^{i,m}_n-X^{i}_n}^2 + \abs{Y^{i,m}_n-Y^{i}_n}^2 \right)$ (blue) for every decreasing value of $m$, where the square $\left(\frac{1}{N} \sum_{i=1}^{N} \max_{n \in [0,T/\Delta t]} \left(\abs{X^{i,m}_n-X^{i}_n}^2 + \abs{Y^{i,m}_n-Y^{i}_n}^2 \right) \right)^2$ (green) is also plotted.}
\label{fig:comparison}
\end{figure}

%%%%%%%%%%%%%%%%%%%%%%%%%%%%%%%%%%%%%%
\bibliographystyle{amsxport}
\bibliography{PSO}
%%%%%%%%%%%%%%%%%%%%%%%%%%%%%%%%%%%%%%

%\bibliography{bounded}
%\bibliographystyle{abbrv}

\end{document}